\newcommand{\pl}[1]{\foreignlanguage{polish}{#1}}
\theoremstyle{plain}
\newtheorem{theorem}{Theorem}
\newtheorem{proposition}{Proposition}[section]
\newtheorem{lemma}[proposition]{Lemma}
\newtheorem*{loglemma}{Logarithmic lemma}
\theoremstyle{definition}
\theoremstyle{remark}
\newtheorem{remark}{Remark}
\newcounter{thm}
\theoremstyle{plain}
\newtheorem{main_theorem}[thm]{Theorem}
\newcommand{\RR}{\mathbb{R}}
\newcommand{\ZZ}{\mathbb{Z}}
\newcommand{\CC}{\mathbb{C}}
\newcommand{\NN}{\mathbb{N}}
\newcommand{\QQ}{\mathbb{Q}}
\newcommand{\seq}[2]{{#1}: {#2}}
\renewcommand{\atop}[2]{\substack{{#1}\\{#2}}}
\newcommand{\norm}[1]{{\lvert #1 \rvert}}
\newcommand{\sprod}[2] {{#1 \cdot #2}}
\newcommand{\abs}[1]{{\lvert {#1} \rvert}}
\newcommand{\sabs}[1]{{\left\lvert {#1} \right\rvert}}
\newcommand{\vnorm}[1]{{\left\lVert {#1} \right\rVert}}
\newcommand{\calP}{\mathcal{P}}
\newcommand{\calF}{\mathcal{F}}
\newcommand{\calM}{\mathcal{M}}
\newcommand{\calI}{\mathcal{I}}
\newcommand{\calS}{\mathcal{S}}
\newcommand{\calO}{\mathcal{O}}
\newcommand{\calN}{\mathcal{N}}
\newcommand{\supp}{\operatornamewithlimits{supp}}
\newcommand{\ind}[1]{{\mathds{1}_{{#1}}}}
\author{Ben Krause}
\address{UCLA Math Sciences Building\\
         Los Angeles CA 90095-1555}
\email{benkrause23@math.ucla.edu}
\author{Mariusz Mirek}
\address{Mariusz Mirek \\
	Universit\"{a}t Bonn \\
	Mathematical Institute\\
	Endenicher Allee 60\\
	D--53115 Bonn \\
	Germany \&
	Instytut Matematyczny\\
	Uniwersytet \pl{Wroc{\lll}awski}\\
	Pl. Grun\-waldzki 2/4\\
	50-384 \pl{Wroc{\lll}aw}\\
	Poland}
\email{mirek@math.uni-bonn.de}
\author{Bartosz Trojan}
\address{
	Bartosz Trojan\\
	Instytut Matematyczny\\
	Uniwersytet \pl{Wroc{\lll}awski}\\
	Pl. Grun\-waldzki 2/4\\
	50-384 \pl{Wroc{\lll}aw}\\
	Poland}
\email{trojan@math.uni.wroc.pl}
\title[Two-parameter inequality]
{Two-parameter version of Bourgain's \\ inequality: Rational frequencies}
\thanks{The authors thank the Hausdorff Research Institute  for
  Mathematics in Bonn for support and hospitality during the Trimester Program
  ``Harmonic Analysis and Partial Differential Equation''. }
\begin{document}
\selectlanguage{english}

\begin{abstract}
	Our aim is to establish the first two-parameter version of  Bourgain's maximal
	logarithmic inequality on $L^2(\mathbb R^2)$ for the rational frequencies. We achieve this by
	introducing a variant of a two-parameter Rademacher--Menschov inequality. The method allows us to
	control an oscillation seminorm as well.
\end{abstract}

\maketitle

\section{Introduction}
Let $A_n=(-2^{-n-1}, 2^{-n-1})$ for $n\in\NN_0=\NN\cup\{0\}$.  Suppose that $\Lambda\subset \RR$ is a finite
set satisfying the following separation condition: for any $\lambda, \lambda' \in \Lambda$,
if $\lambda \neq \lambda'$ then
\begin{equation}
	\label{eq:2}
	\norm{\lambda - \lambda'} \geq 1.
\end{equation}
In \cite{bou}, Bourgain established the following lemma.
\begin{loglemma}
	There exists a constant $C > 0$ such that for each $f \in L^2(\RR)$ we have
	\begin{align}
		\label{eq:26}
		\Big\|\sup_{n\in \NN_0}
		\Big|
		\sum_{\lambda \in \Lambda} \calF^{-1} \big(\ind{A_{n}^\lambda} \calF f \big)
		\Big|
		\Big\|_{L^2}
		\le C\big(\log|\Lambda|\big)^2
		\|f\|_{L^2},
	\end{align}
	where $A_n^\lambda = \lambda + A_n$ and $\calF$ is the Fourier transform operator on $\RR$. Moreover,
	the implied constant  is independent of the cardinality of the set $\Lambda$.
\end{loglemma}
This logarithmic lemma was introduced by Bourgain to reduce some problems in ergodic theory having a number theoretic
nature to questions in harmonic analysis (compare \cite{bou1, bou2} with \cite{bou}). To be more
precise, let $(X, \mathcal{B}, \mu)$ be a $\sigma$-finite measure space and let $T: X \rightarrow X$
be an invertible measure preserving transformation. The classical Birkhoff's theorem (see \cite{birk})
states that for any $f \in L^p(X, \mu)$ with $p\ge1$ the averages
\[
	A_N f(x) := \frac{1}{N}\sum_{n = 0}^{N-1} f(T^n x)
\]
converges $\mu$-almost everywhere. With the aid of the logarithmic
lemma Bourgain proved the pointwise convergence of 
\[ A_N^{\calP} f(x) := \frac{1}{N}\sum_{n = 0}^{N-1} f\big(T^{\calP(n)} x\big) \]
for all $f\in L^p(X, \mu)$ and  $p > 1$; where, $\calP$ is any integer-valued polynomial. The lemma was applied
to the sets
\[
	\mathscr{R}_s
	=
	\big\{ a/q \in [0, 1] \cap \QQ: (a, q)=1, \text{ and } 2^s \le q<2^{s+1} \big\}
\]
giving an acceptable loss with respect to $s$ in \eqref{eq:26} of the order $s^2$ since $\abs{\mathscr{R}_s} \le 4^s$
(see \cite{bou} for more details).

In fact, in \cite{bou} the logarithmic lemma was proven in a much stronger form: for general frequencies without
the separation condition \eqref{eq:2}.  Not long afterwards, it was observed by Lacey (see \cite{lac1}) that
if $\Lambda \subset Q^{-1} \ZZ$ for some $Q \in \NN$ and satisfies separation condition, then
\begin{equation}
	\label{eq:47}
	\Big\|\sup_{n\in \NN_0}
	\Big|
	\sum_{\lambda \in \Lambda} \calF^{-1} \big(\ind{A_{n}^\lambda} \calF f \big)
	\Big|
	\Big\|_{L^2}
	\le C \log\log \big(Q \sqrt{\abs{\Lambda}} \big)
	\|f\|_{L^2}.
\end{equation}
This version of the logarithmic lemma is strictly adjusted to the problems with arithmetic features.
Roughly speaking, when \eqref{eq:47} applied to $\Lambda =  4^{s+1}\cdot\mathscr{R}_s$ with $Q$ equal to the least common multiple of all
$q\in[2^s, 2^{s+1})\cap\NN$,
%denominators in $\Lambda$, 
we obtain a
satisfactory bound as well,  since $Q \le 2^{(s+1)2^{s}}$.

It turned out that the logarithmic lemma is also useful in continuous problems. Especially
important are applications in time-frequency analysis (see e.g.  \cite{DLTT, Lac, Thi}). Recently,
Nazarov, Oberlin and Thiele \cite{NOT} extended Bourgain's inequality  providing $L^p$ bounds for the
$r$-variational counterpart of \eqref{eq:26} (see also \cite{Dem}, \cite{K2} and \cite{Ob}).

In the present article we are concerned with proving a two-parameter variant of Bourgain's inequality
for rational frequencies. From now on $\Lambda$ will be always a subset of $Q_1^{-1}\ZZ \times Q_2^{-1}\ZZ$,
for some $Q_1,Q_2 \in \NN$ satisfying the following two parameter separation condition: for any
$\lambda = (\lambda_1,\lambda_2), \lambda' = (\lambda_1',\lambda_2') \in \Lambda$, if $\lambda \neq \lambda'$ then
\begin{align}
  \label{eq:1}
	\max\{\abs{\lambda_1 - \lambda_1}, \abs{\lambda_2 - \lambda'_2} \}\geq 1.
\end{align}
 One of our main results is the following.
\begin{main_theorem}
	\label{thm:1}
	There is a constant $C > 0$ such that for all $f \in L^2\big(\RR^2\big)$
	\begin{align}
		\label{eq:8}
		\Big\lVert\sup_{n_1, n_2 \in \NN_0}\Big|\sum_{\lambda \in \Lambda}
		\calF^{-1}
		\big(\ind{R_{n_1,n_2}^\lambda} \calF f \big)\Big|
		\Big\rVert_{L^2}
		\leq
		C
		\log \log \big(Q_1 \sqrt{\abs{\Lambda}}\big)
		\cdot \log \log \big(Q_2 \sqrt{\abs{\Lambda}}\big)
		\vnorm{f}_{L^2},
	\end{align}
	where $R_{n_1,n_2}^\lambda = \lambda + A_{n_1} \times A_{n_2}$ and $\mathcal F$ is the Fourier
	transform operator on $\RR^2$. Moreover, the implied constant  is
        independent of $Q_1, Q_2$ and the cardinality of the set $\Lambda$.
\end{main_theorem}
Our motivations to study the bound \eqref{eq:8} lie behind the ongoing project of the second
author with Jim Wright where the authors study $\ell^p\big(\ZZ^3\big)$ boundedness (for $p>1$)
of the following maximal function
\[
	\mathcal Mf(x, y, z)=\sup_{M,
  	N\in\NN}\Big|\frac{1}{MN}\sum_{m=1}^M\sum_{n=1}^Nf\big(x-m, y-n, z-P(m, n)\big)\Big|
\]
where $P$ is an integer-valued polynomial of two variables. Theorem \ref{thm:1} turned out to be
very useful there.

In fact, since these sort of problems find applications in pointwise ergodic theory, we will be interested
in bounding of \eqref{eq:8} for some variant of two-parameter oscillation seminorm rather than the supremum.
Let us recall that in the one-parameter case an oscillation seminorm $\mathcal O$ for a sequence
$(a_n: n\in\NN_0)$ is defined by
\[
	\mathcal O\big(\seq{a_{ n}}{ n \in \NN_0}\big)
	=
	\bigg(\sum_{k\in\NN}
			\sup_{N_k \leq  n \leq N_{k+1}}
			\abs{a_{n} - a_{ N_k}}^2
	\bigg)^{1/2},
\]
for any lacunary sequence $\big(N_k: k\in\NN\big)$. The seminorm $\mathcal O$ is an important object
when  problems concerning pointwise convergence are considered. Indeed, if $\big(a_n(x): n\in\NN_0\big)$
is a sequence of functions such that $\mathcal O\big(\seq{a_{ n}(x)}{ n \in \NN_0}\big)$ is finite for
every lacunary sequence $\big(N_k: k\in\NN\big)$, then the limit $\lim_{n\to\infty} a_{n}(x)$
exists. Thus, we immediately obtain almost everywhere convergence
without relying on a (possibly-unavailable) density argument. Moreover, the oscillation seminorm controls from above the supremum norm. Indeed, for
every $n_0\in\NN_0$ we have
\[
	\sup_{n \in \NN_0}
	\abs{a_{n}}
	\leq
	\abs{a_{n_0}}+
	2\mathcal O \big(\seq{a_{n}}{n \in \NN_0}\big).
\]

In the two-parameter setting we would like to exploit the same kind of concepts. We shall study a variant
of two-parameter oscillation seminorm inspired by the one introduced in \cite{jrw}. Namely, given a lacunary
sequence $\big(N_k : k\in\NN\big) \subseteq \NN$ and a sequence $\big(\seq{a_{{n_1, n_2}}}{n_1, n_2 \in \NN_0}\big)$
an oscillation seminorm $\calO$ is defined by
\[
	\calO \big(\seq{a_{n_1, n_2}}{n_1, n_2 \in \NN_0}\big) =
	\bigg(
	\sum_{k\in\NN}
	\sup_{N_k \leq n_1, n_2 \leq N_{k+1}}
	\abs{a_{n_1, n_2} - a_{N_k,N_k}}^2
	\bigg)^{1/2}.
\]
Let us observe that the oscillation seminorm does not control the two-parameter supremum anymore.
A good counterexample illustrating this is a sequence
$\big(\seq{a_{n_1, n_2}}{n_1, n_2 \in \NN_0}\big)$ defined by
\begin{align*}
	a_{n_1, n_2} =
	\begin{cases}
		n_2	&	\text{if}\ n_1 = 0,\\
		0	&	\text{otherwise.}
	\end{cases}
\end{align*}
Indeed, $\calO\big(a_{n_1, n_2}: n_1, n_2 \in \NN_0)$ is finite for every lacunary sequence
$\big(\seq{N_k}{k \in \NN}\big)$ whereas
\[
	\sup_{n_1, n_2 \in \NN_0} \abs{a_{n_1, n_2}}=\infty.
\]
This shows a major difference between one- and multi-parameter settings. However, the oscillation
seminorm still remains useful in pointwise convergence questions (see Section \ref{sec:2} for details).

The second main result is the following.
\begin{main_theorem}
	\label{thm:2}
	For any lacunary sequence $\calN=\big(\seq{N_k}{k \in \NN}\big)$ there is a constant $C>0$ such that for any 
	finite set $\Lambda \subset Q_1^{-1}\ZZ \times Q_2^{-1}\ZZ$ with $Q_1,Q_2 \in \NN$ satisfying \eqref{eq:1},
	and all $f \in L^2\big(\RR^2\big)$ we have
	\[
		\Big\lVert
		\calO\Big(\seq{\sum_{\lambda \in \Lambda}
		\calF^{-1}
		\big(\ind{R_{n_1, n_2}^\lambda} \calF f \big)}{n_1, n_2 \in \NN_0}
		\Big)
		\Big\rVert_{L^2}
		\leq
		C
		\big(\log \log \big(Q_1 \sqrt{\abs{\Lambda}}\big)
		\cdot
		\log \log \big(Q_2 \sqrt{\abs{\Lambda}}\big)\big)^2
		\vnorm{f}_{L^2},
	\]
	where $R_{n_1, n_2}^\lambda = \lambda + A_{n_1} \times A_{n_2}$ and
	$\mathcal F$ is the Fourier transform operator on $\RR^2$.
\end{main_theorem}
The proofs of Theorem \ref{thm:1} and Theorem \ref{thm:2} consist of three steps. We shall
analyze both: the supremum and the oscillation seminorm in four regions; where the parameters
are small, large and finally mixed. In the last case, by the symmetry it suffices to
consider the region where the first parameter is small and the second is large.

In the one parameter case the regime with small parameters was estimated with the aid
of Rademacher--Menshov theorem, which asserts that there is an absolute constant $C>0$
such that for a given family of sets $U_1 \subseteq U_2 \subseteq\ldots\subseteq \RR$,
for any $N\in\NN$ and for any $f\in L^2\big(\RR\big)$ we have
\begin{align}
	\label{eq:25}
    \Big\|\sup_{1\le n \le N}
	\Big|\mathcal F^{-1}
	\Big(\ind{U_n}\mathcal Ff\Big)
	\Big|
	\Big\|_{L^2}\le C (\log N) \|f\|_{L^2}.
  \end{align}
In our situation we are going to exploit the same sort of ideas, however
the Rademacher--Menshov theorem must be adjusted to the two-parameter settings.
Due to independent parameters  we cannot hope for an analogous formulation with a nested
family of sets. Fortunately, it is still possible to prove a reasonable two-parameter
counterpart of \eqref{eq:25}. In the supremum case, Theorem \ref{th:1} implies that
there is a  constant $C>0$ such that for all $N_1, N_2 \in \NN$ and for all
$f\in L^2\big(\RR^2\big)$ we have
\begin{align*}
	\Big\|
	\sup_{1 \leq n_1 \leq N_1} \sup_{1 \leq n_2 \leq N_2}
	\Big|
	\calF^{-1}
	\Big(\ind{\bigcup_{\lambda \in \Lambda}R_{n_1, n_2}^{\lambda} } \calF f\Big)
	\Big|
	\Big\|_{L^2}
	\le
	C\big(\log N_1\big)\big(\log N_2\big)\|f\|_{L^2}.
  \end{align*}
The proof of Theorem \ref{th:1} is a consequence of a numerical inequality
(see Lemma \ref{lem:1}) which is of independent interests. To the authors' best
knowledge it is the first multi-parameter version of the Rademacher--Menshov theorem.

In the second case we analyze the region with large parameters and get a uniform bound
with respect to the size of the family $\Lambda$. Here it is important that
the frequencies are rational and the periodicity of the Fourier characters will be used
(see Theorem \ref{th:3}). This allows us to reduce the matters to suitable estimates
corresponding to Fej\'{e}r kernels (see Theorem \ref{th:2} and Theorem \ref{th:4}).
Then, in the oscillation case, the Fefferman--Stein vector-valued inequality for the
Hardy--Littlewood maximal operator transfers our problem to one-parameter
oscillation estimates of Fej\'{e}r kernels which complete the job.

Finally, in the mixed regime we need to combine the ideas from the first and
second case. Here we obtain the desired estimate with a logarithmic loss too.

An interesting question remains open whether there is a chance to relax the assumption
concerning rational frequencies in Theorem \ref{thm:1} and Theorem \ref{thm:2}.
 We have some partial results, in this directions, dealing with the case when
$f \in L^2\big(\RR^2\big)$ is assumed to be a tensor function.

\subsection{Notation}
Throughout the whole article, unless otherwise stated, we will write $A \lesssim B$
($A \gtrsim B$) if there is an absolute constant $C>0$ such that $A\le CB$ ($A\ge CB$).
Moreover, $C > 0$ will stand for a large positive constant whose value may vary from occurrence to
occurrence.

\section{Oscillation seminorm}
\label{sec:2}
Let us fix a lacunary sequence $\calN=\big(N_k : k \in \NN\big)$, i.e. a sequence satisfying
$\tau N_k \leq N_{k+1}$ for all $k \in \NN$ and some $\tau > 1$. An oscillation seminorm for a
two-parameter sequence $\big(\seq{a_{n_1, n_2}}{n_1, n_2 \in \NN_0}\big)$ is defined by
\[
	\calO\big(\seq{a_{n_1, n_2}}{n_1,n_2 \in \NN_0}\big)
	=
	\bigg(
	\sum_{k \in \NN} \sup_{N_k \leq n_1, n_2 \leq N_{k+1}}
	\big\lvert a_{n_1, n_2} - a_{N_k, N_k}\big\rvert^2
	\bigg)^{1/2}.
\]
We say that a two-parameter sequence of complex numbers
$\big(\seq{a_{n_1, n_2}}{n_1, n_2 \in \NN_0}\big)$ \emph{converges} to $a\in\CC$ if for every
$\varepsilon > 0$ there is $N \in \NN$ such that for all $n_1, n_2 \geq N$
\[
	\abs{a_{n_1, n_2} - a} \leq \varepsilon.
\]
In this case we write
\[
	\lim_{n_1, n_2 \to \infty}
	a_{n_1, n_2} = a.
\]
Thus $n_1, n_2 \to \infty$ is understood in the sense that $\min\{n_1, n_2\} \to \infty$.
We say that a two-parameter sequence $\big(a_{n_1, n_2}: n_1, n_2 \in \NN_0\big)$ is a \emph{Cauchy sequence}
if for all $\varepsilon > 0$ there is $N \in \NN$ such that for all $n_1, n_2, m_1, m_2 \geq N$
\[
	\abs{a_{n_1, n_2}-a_{m_1, m_2}} < \varepsilon.
\]
It is not difficult to see that every Cauchy sequence has a limit. But, unlike the one-parameter
situation, it is not true that every convergent two-parameter sequence must be \emph{bounded}.
To see this it suffices to consider
\begin{align*}
	a_{n_1, n_2} =
		\begin{cases}
			n_2 & \text{if}\ n_1 = 0,\\
			0   & \text{otherwise.}
		\end{cases}
\end{align*}
This is the main obstacle which prevents the two-parameter supremum norm from
being controlled from above by the oscillation seminorm. However, the two-parameter oscillation seminorm is still
a very useful object in problems involving pointwise convergence.
\begin{proposition}
	\label{prop:1}
	Suppose that $\big(a_{n_1, n_2}: n_1, n_2 \in \NN_0\big)$ is a sequence of complex numbers
	such that for every lacunary sequence $(N_k: k\in\NN)$ the oscillation seminorm
	$\calO\big(\seq{a_{n_1, n_2}}{n_1, n_2 \in\NN_0}\big)$ is finite. Then the limit
	$\lim_{n_1, n_2 \to \infty} a_{n_1, n_2}$ exists.
\end{proposition}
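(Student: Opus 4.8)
The plan is to establish the contrapositive in a convenient form: it suffices to show that the hypothesis forces $\big(\seq{a_{n_1, n_2}}{n_1, n_2 \in \NN_0}\big)$ to be a Cauchy sequence in the two-parameter sense introduced above, because it has already been observed that every such Cauchy sequence converges. So I would suppose, towards a contradiction, that the sequence is \emph{not} Cauchy and produce a lacunary sequence along which the oscillation seminorm is infinite. Failure of the Cauchy condition yields an $\varepsilon > 0$ such that for every $N \in \NN$ there are indices $n_1, n_2, m_1, m_2 \geq N$ with $\abs{a_{n_1, n_2} - a_{m_1, m_2}} \geq \varepsilon$.

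The first key step is to convert this into a statement about deviations from the diagonal ``corner'' points $a_{N, N}$, which are precisely the reference values appearing in $\calO$. Given $N$ and such $n_1, n_2, m_1, m_2$, the triangle inequality with base point $a_{N, N}$ shows that at least one of $\abs{a_{n_1, n_2} - a_{N, N}}$ and $\abs{a_{m_1, m_2} - a_{N, N}}$ is $\geq \varepsilon / 2$; since the pair $(N, N)$ is itself admissible, writing $\varepsilon' = \varepsilon / 2$ we conclude that for every $N \in \NN$ there exist $n_1, n_2 \geq N$ with $\abs{a_{n_1, n_2} - a_{N, N}} \geq \varepsilon'$.

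The second step is a greedy construction of the bad lacunary sequence. Put $N_1 = 1$, and given $N_k$ use the previous step to pick $n_1^{(k)}, n_2^{(k)} \geq N_k$ with $\abs{a_{n_1^{(k)}, n_2^{(k)}} - a_{N_k, N_k}} \geq \varepsilon'$, then set $N_{k+1} = \max\{ n_1^{(k)}, n_2^{(k)}, 2 N_k \}$. By construction $\big(\seq{N_k}{k \in \NN}\big)$ satisfies $2 N_k \leq N_{k+1}$, hence is lacunary, and $N_k \leq n_1^{(k)}, n_2^{(k)} \leq N_{k+1}$, so
\[
	\sup_{N_k \leq n_1, n_2 \leq N_{k+1}} \abs{a_{n_1, n_2} - a_{N_k, N_k}}^2 \geq (\varepsilon')^2
\]
for every $k \in \NN$. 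Summing over $k$ gives $\calO\big(\seq{a_{n_1, n_2}}{n_1, n_2 \in \NN_0}\big) = \infty$ for this lacunary sequence, contradicting the hypothesis; therefore the sequence is Cauchy and hence convergent.

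The only step that requires a little care is the reduction to corner points: since $\calO$ compares $a_{n_1, n_2}$ with the lower-corner value $a_{N_k, N_k}$ rather than with an arbitrary reference, one cannot feed a generic violating pair directly into the recursion and must first pass through the diagonal $a_{N, N}$ as above. The remaining bookkeeping — checking lacunarity and that the chosen indices lie in the correct block — is routine.
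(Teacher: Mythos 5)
Your argument is correct and is essentially the paper's own proof: both pass from failure of the two-parameter Cauchy condition to a deviation of size $\gtrsim \varepsilon$ from the diagonal reference value $a_{N,N}$ via the triangle inequality, then greedily build a lacunary sequence whose blocks each contribute a fixed positive amount to the oscillation sum, yielding a contradiction. The only differences are cosmetic (your $\varepsilon' = \varepsilon/2$ versus the paper's $2\varepsilon$, and your $N_{k+1} = \max\{n_1^{(k)}, n_2^{(k)}, 2N_k\}$ versus the paper's $N_{k+1} = 2\max\{u_1^k, u_2^k\}$, both of which enforce lacunarity with ratio $2$).
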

\begin{proof}
	Suppose that the limit does not exist. Since the sequence is not a Cauchy sequence, there
	exists $\varepsilon>0$ such that for every $N \in \NN$ there are $n_1, n_2, m_1, m_2 \ge N$
	satisfying
    \begin{align*}
		\abs{a_{n_1, n_2}-a_{m_1, m_2}} \ge 2\varepsilon.
    \end{align*}
	Hence, for some $n_1, n_2 \geq N$,
	\begin{equation}
		\label{eq:52}
		\abs{a_{n_1, n_2} - a_{N, N}} \geq \varepsilon.
	\end{equation}
	We are going to construct a sequence $(N_k : k \in \NN)$ such that for all $k \in \NN$
	\[
		\sup_{N_k \leq n_1, n_2 \leq N_{k+1}} \abs{a_{n_1, n_2} - a_{N_k, N_k}} \geq \varepsilon.
	\]
	Let $N_1 = 1$. Having chosen $N_k$, by \eqref{eq:52}, we can find $u_1^k, u_2^k \geq N_k$ so that
	\[
		\abs{a_{u_1^k, u_2^k} - a_{N_k, N_k}} \geq \varepsilon.
	\]
	Setting $N_{k+1} = 2 \max\{u_1^k, u_2^k\}$ we obtain
	\[
		\sup_{N_k \leq n_1, n_2 \leq N_{k+1}} \abs{a_{n_1, n_2} - a_{N_k, N_k}} \geq \varepsilon.
	\]
	Now, for every $K\in\NN$ we may estimate
	\[
		\varepsilon K^{1/2}
		\le
		\bigg(\sum_{k=1}^K
		\sup_{N_k \le n_1, n_2 \leq N_{k+1}}
		\abs{a_{n_1, n_2} - a_{N_k, N_k}}^2
		\bigg)^{1/2}
		\le
		\calO\big(\seq{a_{n_1, n_2}}{n_1, n_2 \in \NN_0}\big).
	\]
	This leads to a contradiction since $K$ may be taken as large as we wish.
\end{proof}
Now, for $w \in \NN^2$ we define
\begin{equation}
	\label{eq:11}
	\begin{aligned}
	U_{w}^{00} &= \{0, 1,\ldots, w_1-1\}\times \{0, 1,\ldots, w_2-1\},\\
	U_{w}^{01} &= \{0, 1,\ldots, w_1-1\}\times \{w_2, \ldots \},\\
	U_{w}^{10} &= \{w_1, \ldots \}\times \{0, 1,\ldots, w_2-1\},\\
	U_{w}^{11} &= \{w_1, \ldots \}\times \{w_2, \ldots \}.
	\end{aligned}
\end{equation}
The following lemma allows us to split the oscillation seminorm into four different regimes.
\begin{lemma}
	\label{lem:3}
	For a fixed $w \in \NN^2$ and each sequence of complex numbers
	$\big(a_{n_1, n_2}: n_1, n_2 \in \NN_0\big)$ we have
	\begin{align}
		\label{eq:42}
		\calO\big(a_{n_1, n_2}: n_1, n_2 \in \NN_0\big)
		\leq
		4 \sup_{n_1, n_2 \in \NN_0^2}|a_{n_1, n_2}|
		+\sum_{\mu \in \{0, 1\}^2}
		\calO_\mu\big(a_{n_1, n_2} : n_1, n_2 \in \NN_0\big),
    \end{align}
	where $K_\mu = \big\{k \in \NN: (N_k, N_k), (N_{k+1}, N_{k+1}) \in U^\mu_w \big\}$ and
	\[
		\calO_\mu \big(a_{n_1, n_2}: n_1, n_2 \in \NN_0\big)
		=
		\bigg(\sum_{k \in K_\mu}
		\sup_{N_k \leq n_1, n_2 \leq N_{k+1}}
		| a_{n_1, n_2}-a_{N_k, N_k}|^2
		\bigg)^{1/2}.
	\]
\end{lemma}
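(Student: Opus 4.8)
The plan is to decompose the index set $\NN_0^2$ according to whether the lacunary points $(N_k,N_k)$ land in one of the four "boxes'' $U_w^{00}, U_w^{01}, U_w^{10}, U_w^{11}$, and to show that the indices $k$ which \emph{straddle} the boundary contribute only finitely many — in fact at most a controlled, small number of — terms, each of which can be crudely bounded by the supremum.

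First I would observe that every $k\in\NN$ falls into exactly one of six categories: either $k\in K_\mu$ for one of the four $\mu\in\{0,1\}^2$ (both endpoints in the same box), or the pair $(N_k,N_k)$ and $(N_{k+1},N_{k+1})$ lie in different boxes. Since $N_k\le N_{k+1}$, a change of box can only happen when $N_k<w_1\le N_{k+1}$ or $N_k<w_2\le N_{k+1}$ (the sequence of first — equivalently second — coordinates is nondecreasing, so it can cross each threshold $w_1$ and $w_2$ at most once). Hence there are at most two "transitional'' indices $k$, and for each such $k$ we estimate
\[
	\sup_{N_k\le n_1,n_2\le N_{k+1}}\abs{a_{n_1,n_2}-a_{N_k,N_k}}^2
	\le
	\Big(2\sup_{n_1,n_2\in\NN_0}\abs{a_{n_1,n_2}}\Big)^2
	=
	4\sup_{n_1,n_2\in\NN_0}\abs{a_{n_1,n_2}}^2.
\]
Summing the (at most two) transitional terms gives a contribution bounded by $\sqrt{8}\,\sup\abs{a_{n_1,n_2}}$, which is comfortably less than $4\sup\abs{a_{n_1,n_2}}$; wrapping the remaining terms into the four sums $\calO_\mu$ and using $\big(\sum_\mu x_\mu^2\big)^{1/2}\le\sum_\mu x_\mu$ then yields \eqref{eq:42}.

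The one point that needs a little care — and the only place where the argument is more than bookkeeping — is the claim that at most two indices are transitional. This follows because $k\mapsto N_k$ is strictly increasing, so the interval $[N_k,N_{k+1})$ partition $\{N_1,N_1+1,\dots\}$ and each of the two thresholds $w_1,w_2$ lies in at most one of these intervals; an index $k$ is transitional precisely when its interval contains $w_1$ or $w_2$, giving the bound. I expect the remaining verification — that the subcollections $K_\mu$ together with the transitional indices exhaust $\NN$ and that the crude supremum bound is legitimate even when the relevant suprema are infinite (in which case the right-hand side of \eqref{eq:42} is already infinite and there is nothing to prove) — to be entirely routine.
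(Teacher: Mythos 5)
Your proposal is correct and takes essentially the same approach as the paper: both identify the (at most two) ``transitional'' indices $k$ for which $(N_k,N_k)$ and $(N_{k+1},N_{k+1})$ lie in different regions, bound those terms crudely by $2\sup|a_{n_1,n_2}|$, and absorb the remaining indices into the $\calO_\mu$. The only cosmetic differences are that the paper spells out four explicit subcases depending on which $K_\mu$ are nonempty and uses the bound $2\sup+2\sup\le 4\sup$, whereas you unify the cases and get the marginally sharper $2\sqrt{2}\sup$; the substance is identical.
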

\begin{proof}
	First, let us observe that the diagonal $\{(n, n) : n \in \NN_0\}$ intersects $U_w^{01}$
	or $U_w^{10}$, but not both. Therefore, without loss of generality, we may assume that
	$K_{00}, K_{01}$ and $K_{11}$ are nonempty. Only four cases may occur:
	\begin{enumerate}
		\item there are $u, v\in \NN$ such that $K_{00}\cup \{u\}\cup
  K_{01}\cup\{v\}\cup K_{11}=\NN,$ and
			$(N_u, N_u) \in U_w^{00}$, $(N_{u+1}, N_{u+1}) \in U_w^{01}$, and
			$(N_v, N_v) \in U_w^{01}$, $(N_{v+1}, N_{v+1}) \in U_w^{11}$;
		\item there is $u \in \NN$ such that $K_{00}\cup \{u\}\cup
  K_{01}\cup K_{11}=\NN,$ and $(N_u, N_u) \in U_w^{00}$, and
			$(N_{u+1}, N_{u+1}) \in U_w^{01}$;
		\item there is $v \in \NN$ such that $K_{00}\cup
                  K_{01}\cup \{v\}\cup K_{11}=\NN,$
                  and $(N_v, N_v) \in U_w^{01}$,
			and $(N_{v+1}, N_{v+1}) \in U_w^{11}$;
		\item $K_{00}\cup K_{01}\cup K_{11}=\NN$.
	\end{enumerate}
	In the first case one can see that
	\begin{multline*}
 		\calO \big(a_{n_1, n_2}: n_1, n_2 \in\NN_0\big)
		\le \sum_{\mu \in \{0, 1\}^2}
		\bigg(\sum_{k \in K_\mu}
        \sup_{N_k \leq n_1, n_2 \leq N_{k+1}}
		| a_{n_1, n_2}-a_{N_k, N_k}|^2
		\bigg)^{1/2}\\
		+2\sup_{N_u \leq n_1, n_2 \leq N_{u+1}} |a_{n_1, n_2}|
		+2\sup_{N_v \leq n_1, n_2 \leq N_{v+1}} |a_{n_1, n_2}|,
	\end{multline*}
	which is dominated by the right-hand side of \eqref{eq:42}. For the remaining three cases we proceed
	analogously.
\end{proof}

\section{Two-parameter Rademacher--Menshov theorem}
In this section we prove a two-parameter version of the Rademacher--Menshov theorem. We start by proving
a numerical inequality which is interesting in its own right.

For a given sequence $a = \big(a_{n_1,n_2} : n_1,n_2 \in \NN_0\big)$ we define
the difference operators
\begin{gather*}
	\Delta^1_{n_1,n_2} (a) = a_{n_1,n_2} - a_{n_1-1, n_2}, \qquad
	\Delta^2_{n_1,n_2} (a) = a_{n_1,n_2} - a_{n_1,n_2-1},
\end{gather*}
and the double difference operator
\[
	\Delta_{n_1,n_2} (a) = a_{n_1,n_2} - a_{n_1,n_2-1}-a_{n_1-1,n_2} + a_{n_1-1,n_2-1}.
\]
Let us observe that $\Delta_{n_1,n_2} (a)
=\Delta^1_{n_1,n_2} \big(\Delta^2_{n_1,n_2} (a)\big)
=\Delta^2_{n_1,n_2} \big(\Delta^1_{n_1,n_2} (a)\big)$.
Moreover, for any two dyadic intervals $I_{j_1}^{i_1} = \big((j_1-1) 2^{i_1}, j_12^{i_1}\big]$,
$I_{j_2}^{i_2} = \big((j_2-1) 2^{i_2}, j_22^{i_2}\big]$ we have
$$
\sum_{k_1 \in I_{j_1}^{i_1}} \sum_{k_2 \in I_{j_2}^{i_2}} \Delta_{k_1,k_2}(a)
=
a_{j_12^{i_1}, j_22^{i_2}} - a_{(j_1-1)2^{i_1}, j_22^{i_2}} - a_{j_12^{i_1}, (j_2-1)2^{i_2}} + a_{(j_1-1)2^{i_1}, (j_2-1)2^{i_2}}.
$$
\begin{lemma}
	\label{lem:1}
	For every sequence of complex numbers $a=\big(a_{n_1,n_2}:  n_1, n_2 \in \NN_0 \big)$
	and all $s_1, s_2 \in \NN_0$
	\begin{multline}
		\label{eq:16}
		\sup_{0\le n_1< 2^{s_1}}\sup_{0\le n_2< 2^{s_2}}
		\abs{a_{n_1, n_2}}
		\le
		8\sum_{i_1 = 0}^{s_1} \sum_{i_2 = 0}^{s_2}
		\Big(
		\sum_{j_1 = 1}^{2^{s_1-i_1}}
		\sum_{j_2 = 1}^{2^{s_2-i_2}}
		\big\lvert
		\sum_{k_1 \in I_{j_1}^{i_1}} \sum_{k_2 \in I_{j_2}^{i_2}}
		\Delta_{k_1,k_2}(a)
		\big\rvert^2
		\Big)^{1/2}\\
		+
		2\sqrt{2}
		\sum_{i_1 = 0}^{s_1}
		\Big(
		\sum_{j_1 = 1}^{2^{s_1-i_1}}
		\big\lvert
		\sum_{k_1 \in I_{j_1}^{i_1}}
		\Delta^1_{k_1, n_2^0} (a)
		\big\rvert^2
		\Big)^{1/2}
		+
		2\sqrt{2}
		\sum_{i_2 = 0}^{s_2}
		\Big(
		\sum_{j_2 = 1}^{2^{s_2-i_2}}
		\big\lvert
		\sum_{k_2 \in I_{j_2}^{i_2}}
		\Delta^2_{n_1^0, k_2} (a)
		\big\rvert^2
		\Big)^{1/2}
		+\abs{a_{n_1^0, n_2^0}}
	\end{multline}
	for any $n_1^0 \in \{0, \ldots, 2^{s_1}-1\}$, $n_2^0 \in \{0, \ldots, 2^{s_2}-1\}$.
\end{lemma}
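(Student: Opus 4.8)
The plan is to prove \eqref{eq:16} by a two-fold telescoping along dyadic scales, imitating the classical proof of the one-parameter Rademacher--Menshov inequality \eqref{eq:25} but iterating it in each variable separately. Fix $n_1, n_2$ with $0 \le n_1 < 2^{s_1}$, $0 \le n_2 < 2^{s_2}$, and write the binary expansions
\[
	n_1 = \sum_{i_1 \in E_1} 2^{i_1}, \qquad n_2 = \sum_{i_2 \in E_2} 2^{i_2},
\]
for suitable subsets $E_1 \subseteq \{0,\ldots,s_1-1\}$ and $E_2 \subseteq \{0,\ldots,s_2-1\}$. Just as in the one-parameter case, each such $n_1$ determines a chain of at most $s_1+1$ nested dyadic intervals whose right endpoints increase from $0$ (or from $n_1^0$) up to $n_1$, and at each step the increment is a single dyadic block $I_{j_1}^{i_1}$; similarly for $n_2$. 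The first step is therefore to write
\[
	a_{n_1, n_2} - a_{n_1^0, n_2^0}
	=
	\big(a_{n_1, n_2} - a_{n_1^0, n_2}\big)
	+
	\big(a_{n_1^0, n_2} - a_{n_1^0, n_2^0}\big),
\]
and to expand the first bracket by telescoping in the first variable along the dyadic chain attached to $n_1$ (with $n_2$ frozen), and the second bracket by telescoping in the second variable along the chain attached to $n_2$ (with $n_1 = n_1^0$ frozen). This already produces the two single-sum terms on the right of \eqref{eq:16}, up to the Cauchy--Schwarz step described below; but the first bracket still contains the ``mixed'' increments $\sum_{k_1 \in I_{j_1}^{i_1}} \Delta^1_{k_1, n_2}(a)$ with $n_2$ free, which must be further resolved.

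The second step handles exactly this: for each fixed dyadic block $I_{j_1}^{i_1}$ appearing in the first chain, I would telescope $\sum_{k_1 \in I_{j_1}^{i_1}} \Delta^1_{k_1, n_2}(a)$ in the second variable along the dyadic chain attached to $n_2$, starting from $n_2^0$. Each step of this inner telescoping contributes a term of the form $\sum_{k_1 \in I_{j_1}^{i_1}} \sum_{k_2 \in I_{j_2}^{i_2}} \Delta_{k_1,k_2}(a)$ (since $\Delta = \Delta^1 \Delta^2$ and the sum over $k_1 \in I_{j_1}^{i_1}$ of the mixed second difference over $k_2 \in I_{j_2}^{i_2}$ telescopes correctly against the boundary), together with a boundary term $\sum_{k_1 \in I_{j_1}^{i_1}} \Delta^1_{k_1, n_2^0}(a)$ coming from the base point $n_2^0$ of the inner chain. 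Collecting, $a_{n_1,n_2} - a_{n_1^0,n_2^0}$ becomes a sum of at most $(s_1+1)(s_2+1)$ double-block increments, plus at most $s_1+1$ first-variable single-block increments at height $n_2^0$, plus at most $s_2+1$ second-variable single-block increments at abscissa $n_1^0$.

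The third step is to pass from this pointwise identity to the supremum bound. Here I use the standard trick: for each scale pair $(i_1,i_2)$ the block indices $(j_1,j_2)$ that can occur as $(n_1,n_2)$ ranges over $[0,2^{s_1}) \times [0,2^{s_2})$ are distinct for distinct $(n_1,n_2)$ sharing the same binary digits at that scale, so by Cauchy--Schwarz
\[
	\sum_{i_1,i_2}
	\big|\textstyle\sum_{k_1 \in I_{j_1(n_1,i_1)}^{i_1}} \sum_{k_2 \in I_{j_2(n_2,i_2)}^{i_2}} \Delta_{k_1,k_2}(a)\big|
	\le
	\sum_{i_1,i_2}
	\Big(\sum_{j_1=1}^{2^{s_1-i_1}} \sum_{j_2=1}^{2^{s_2-i_2}}
	\big|\textstyle\sum_{k_1 \in I_{j_1}^{i_1}} \sum_{k_2 \in I_{j_2}^{i_2}} \Delta_{k_1,k_2}(a)\big|^2\Big)^{1/2},
\]
uniformly in $(n_1,n_2)$, and likewise for the two families of single-block terms. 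Taking the supremum over $n_1,n_2$ and tracking constants (each telescoping introduces a factor at most $2$, and the triangle-inequality split at the start costs another factor, yielding the $8$ and $2\sqrt2$ in the statement) finishes the proof.

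I expect the main obstacle to be the bookkeeping in the second step: one must verify that after the inner telescoping the surviving terms really are the \emph{double} increments $\sum\sum \Delta_{k_1,k_2}(a)$ and pure boundary terms, with no leftover ``half-mixed'' pieces, and that the dyadic chains in the two variables can be chosen independently so that the resulting block indices are genuinely distinct at each scale --- this is what licenses the Cauchy--Schwarz step and is the crux of why the two-parameter statement holds at all. Once the combinatorial structure of the dyadic chains is pinned down, the analytic content is just the triangle inequality and Cauchy--Schwarz.
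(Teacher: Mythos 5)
Your proposal follows essentially the same route as the paper: a triangle--inequality split of $a_{n_1,n_2}-a_{n_1^0,n_2^0}$ into a ``mixed'' piece and two one-variable pieces, dyadic telescoping in each variable (with the boundary of the inner telescoping producing the $\Delta^1_{\cdot,n_2^0}$ and $\Delta^2_{n_1^0,\cdot}$ terms), and a per-scale Cauchy--Schwarz. The paper packages the one-variable step as a standalone inequality, namely \eqref{eq:6}, and then applies it three times together with Minkowski's inequality; this is exactly your double telescoping in a cleaner wrapper, so there is no genuine difference of method.

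Two details should be tightened. First, the binary expansion yields a decomposition with at most one dyadic block per scale only for intervals anchored at $0$; since $n_1^0$ and $n_2^0$ are arbitrary, the telescoping runs over a general interval $[m,n)$, and you need the slightly stronger combinatorial fact --- proved in the paper by a greedy choice of the longest fitting dyadic block --- that any such interval decomposes into dyadic blocks with each length occurring at most \emph{twice}. This ``at most twice'' is precisely what puts the $\sqrt2$ into the per-scale Cauchy--Schwarz step, and your count of ``at most $s_1+1$'' increments undercounts by a factor of $2$ (harmless, but worth noting since it feeds the constants $8=(2\sqrt2)^2$ and $2\sqrt2$). Second, the justification you give for the Cauchy--Schwarz step --- that the block indices $(j_1,j_2)$ are distinct for distinct $(n_1,n_2)$ --- is not the relevant fact. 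What licenses the step is that, for each \emph{fixed} $(n_1,n_2)$ and each scale pair $(i_1,i_2)$, only a bounded number of blocks of that scale appear in the decomposition, so the short $\ell^1$ sum over those few blocks is dominated by a constant times the full $\ell^2$ sum over all $j_1,j_2$, uniformly in $(n_1,n_2)$; taking the supremum over $(n_1,n_2)$ then costs nothing. With those two points made precise your argument matches the paper's.
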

\begin{proof}
	First of all, we prove that for every sequence of complex numbers
	$b=\big(b_{n} : n \in \NN_0\big)$ and every $s\in\NN_0$ we have
	\begin{align}
    	\label{eq:6}
		\sup_{0 \leq n < 2^s} \sabs{b_n-b_{n_0}}
		\leq
		2\sup_{0 \leq m< n < 2^s} \sabs{b_n-b_m}\le
		2\sqrt{2}
		\sum_{i = 0}^s
		\bigg(
		\sum_{j = 1}^{2^{s-i}} \sabs{b_{j2^i} - b_{(j-1)2^i}}^2
		\bigg)^{1/2}
	\end{align}
	for any $n_0 \in \{0, \ldots, 2^s-1\}$. For this purpose we need
	the following combinatorial property: any interval $[m, n)$, with $0\le m<n< 2^s$ where
	$m, n \in \NN_0$, can be written as a disjoint union of dyadic subintervals, i.e. belonging
	to some
	\[
	\calI_i = \big\{[j 2^i, (j+1) 2^i) : 0 \leq j \leq 2^{s-i} - 1\big\}
	\]
	for $0 \leq i \leq s$, such that each length appears at most twice. For the proof, we start by
	setting $m_0 = m$. Having chosen $m_p$ we select $m_{p+1}$ in such a way that $[m_p, m_{p+1})$
	is the longest dyadic interval starting at $m_p$ and contained in $[m_p, n)$.
	We claim that, if $m_{p+1} - m_p \geq m_{p+2} - m_{p+1}$ then
	$m_{p+2} - m_{p+1} > m_{p+3} - m_{p+2}$. Suppose that, on the contrary,
	$m_{p+2} - m_{p+1} \leq m_{p+3} - m_{p+2}$. Then
	$$
	[m_{p+1}, 2m_{p+2} - m_{p+1}) \subset [m_{p+1}, n)
	$$
	and since $2 (m_{p+2} - m_{p+1})$ divides $m_{p+1}$ it contradicts with the choice of $m_{p+2}$.

    Now, we turn to the proof of \eqref{eq:6}. If $m<n$ we may write
	$$
	[m, n) = \bigcup_{p = 0}^{P} [u_p, u_{p+1})
	$$
	for some $P\ge1$, where each interval $[u_p, u_{p+1})$ is dyadic. Then
	$$
	\abs{b_{n} - b_{m}}
	\leq
	\sum_{p = 0}^{P}
	\abs{b_{u_{p+1}} - b_{u_p}}
	=
	\sum_{i = 0}^s
	\sum_{p : [u_p, u_{p+1}) \in \calI_i}
	\abs{b_{u_{p+1}} - b_{u_p}}.
	$$
	We note that the inner sum contains at most two terms, thus
	$$
	\big\lvert
	b_{n} - b_m
	\big\rvert
	\leq
	\sqrt{2}
	\sum_{i = 0}^s
	\bigg(\sum_{p : [u_p, u_{p+1}) \in \calI_i}
	\big\lvert
	b_{u_{p+1}} - b_{u_p}
	\big\rvert^2
	\bigg)^{1/2}
	$$
	which is bounded by the right-hand side of \eqref{eq:6} and the proof is completed.

	Next, we show how the inequality \eqref{eq:16} can be deduced from \eqref{eq:6}.
	Let $n_1^0 \in \{0, \ldots, 2^{s_1}-1\}$ and  $n_2^0 \in \{0, \ldots, 2^{s_2}-1\}$
	and observe that
    \begin{equation}
		\label{eq:7}
		\sup_{0\le n_1< 2^{s_1}}
		\sup_{0\le n_2< 2^{s_2}}
		\big|a_{n_1, n_2}\big|
		\le
		\sup_{0\le n_1< 2^{s_1}}\sup_{0\le n_2< 2^{s_2}}
		\big|a_{n_1, n_2}-a_{n_1,n_2^0}\big|+
		\sup_{0\le n_1< 2^{s_1}}
		\big|a_{n_1,n_2^0}-a_{n_1^0,n_2^0}\big|
		+\big|a_{n_1^0,n_2^0}\big|.
  	\end{equation}
	In view of \eqref{eq:6}, the second term of \eqref{eq:7} can be immediately dominated
	by the second sum on the right-hand side of
        \eqref{eq:16}. Applying now inequality
	\eqref{eq:6} to the inner supremum in the first term of \eqref{eq:7} one gets
	\begin{multline}
		\label{eq:21}
		\sup_{0\le n_1< 2^{s_1}}\sup_{0\le n_2< 2^{s_2}}
		\big|a_{n_1, n_2}-a_{n_1,n_2^0}\big|
		\le2\sqrt{2}
		\sup_{0\le n_1< 2^{s_1}}
		\sum_{i_2 = 0}^{s_2}
		\Big(
		\sum_{j_2 = 1}^{2^{s_2-i_2}}
		\big\lvert
		a_{n_1, j_2 2^{i_2}}-a_{n_1, (j_2-1)2^{i_2}}
		\big\rvert^2
		\Big)^{1/2}\\
		\le
		2\sqrt{2}
  		\sum_{i_2 = 0}^{s_2}
		\Big(
		\sum_{j_2 = 1}^{2^{s_2-i_2}}
		\sup_{0\le n_1< 2^{s_1}}
		\big\lvert
		a_{n_1, j_2 2^{i_2}}-a_{n_1, (j_2-1)2^{i_2}}
		-a_{n_1^0, j_22^{i_2}}+a_{n_1^0, (j_2-1)2^{i_2}}
		\big\rvert^2
		\Big)^{1/2}\\
		+2\sqrt{2}
		\sum_{i_2 = 0}^{s_2}
		\Big(
		\sum_{j_2 = 1}^{2^{s_2-i_2}}
		\big\lvert
		\sum_{k_2 \in I_{j_2}^{i_2}}
		\Delta^2_{n_1^0, k_2} (a)
		\big\rvert^2
		\Big)^{1/2}.
	\end{multline}
	Next, using \eqref{eq:6} for the second time one can dominate \eqref{eq:21} by the first sum
	from \eqref{eq:16}. Indeed,  taking $b_{n_1}=a_{n_1, j_22^{i_2}} -a_{n_1, (j_2-1)2^{i_2}}$
	inequality \eqref{eq:6} yields
	\begin{multline*}
  		\sum_{i_2 = 0}^{s_2}
		\Big(
		\sum_{j_2 = 1}^{2^{s_2-i_2}}
		\sup_{0\le n_1< 2^{s_1}}
		\big\lvert
		a_{n_1, j_22^{i_2}}-a_{n_1, (j_2-1)2^{i_2}}
		-a_{n_1^0, j_22^{i_2}}+a_{n_1^0, (j_2-1)2^{i_2}}
		\big\rvert^2
		\Big)^{1/2}\\
		\le2\sqrt{2}
		\sum_{i_2 = 0}^{s_2}
		\bigg(
		\sum_{j_2 = 1}^{2^{s_2-i_2}}
		\bigg(\sum_{i_1 = 0}^{s_1}
		\Big(
		\sum_{j_1 = 1}^{2^{s_1-i_1}}
		\big\lvert
		\sum_{k_1 \in I_{j_1}^{i_1}} \sum_{k_2 \in I_{j_2}^{i_2}}
		\Delta_{k_1,k_2}(a)
		\big\rvert^2
		\Big)^{1/2}\bigg)^2\bigg)^{1/2}\\
		\le2\sqrt{2}
		\sum_{i_1 = 0}^{s_1}
		\sum_{i_2 = 0}^{s_2}
		\bigg(
		\sum_{j_1 = 1}^{2^{s_1-i_1}}
		\sum_{j_2 = 1}^{2^{s_2-i_2}}
		\big\lvert
		\sum_{k_1 \in I_{j_1}^{i_1}}
		\sum_{k_2 \in I_{j_2}^{i_2}}
		\Delta_{k_1,k_2}(a)
		\big\rvert^2
		\bigg)^{1/2}.
	\end{multline*}
	The last estimate follows from Minkowski's inequality. This finishes the proof
	of Lemma \ref{lem:1}.
\end{proof}

For a function $f \in L^1\big(\RR^d\big)$ let
$$
\calF{f}(\xi) = \int_{\RR^d} e^{-2\pi i \sprod{\xi}{x}} f(x) {\: \rm d}x.
$$
By $\mathcal F^{-1}$ we denote the inverse Fourier transform. In our
setup $d=1$ or $d=2$, but it will be always clear from the context. The following
theorem gives a variant of two-parameter Rademacher--Menshov inequality.
\begin{theorem}
	\label{th:1}
	Let $m = \big(m_{n_1,n_2} : n_1, n_2 \in \NN_0\big)$ be a sequence of measurable functions
	on $\RR^2$ satisfying
	\begin{align}
		\label{eq:6.1}
		&
		\sup_{\xi \in \RR^2}
		\sum_{n_1,n_2 \in \NN}
		\big\lvert \Delta_{n_1,n_2}(m) (\xi)
		\big\rvert
		\leq B, \qquad\\
		\label{eq:6.2}
		&
		\sup_{\xi \in \RR^2}
		\sum_{n_1 \in \NN}
		\big\lvert \Delta^1_{n_1,0}(m)(\xi) \big\rvert
		\leq B, \qquad
		\sup_{\xi \in \RR^2}
		\sum_{n_2 \in \NN}
		\big\lvert \Delta^2_{0, n_2}(m)(\xi) \big\rvert
		\leq B,\\
		\label{eq:6.4}
		&
		\sup_{\xi \in \RR^2} \lvert m_{0,0}(\xi) \rvert \leq B.
	\end{align}
	Then there is a constant $C>0$ such that for all $N_1, N_2 \in\NN$ and all
	$f \in L^2\big(\RR^2\big)$
	$$
	\Big\lVert
	\sup_{0\le n_1< N_1}
	\sup_{0\le n_2< N_2}
	\big|\calF^{-1}\big(m_{n_1,n_2} \calF f\big)\big|
	\Big\rVert_{L^2}
	\leq
	C B (\log N_1)(\log N_2) \vnorm{f}_{L^2}.
	$$
\end{theorem}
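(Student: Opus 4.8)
The plan is to combine the numerical inequality of Lemma~\ref{lem:1} with the one-parameter Rademacher--Menshov bound \eqref{eq:25} (applied in each variable separately), interpreting the discrete index $n_1,n_2$ as running over Fourier multipliers $m_{n_1,n_2}$ and the ``differences'' $\Delta_{k_1,k_2}(m)$ as telescoping building blocks. First I would reduce to $N_1=2^{s_1}$ and $N_2=2^{s_2}$, since the supremum over $0\le n_1<N_1$, $0\le n_2<N_2$ is dominated by the supremum over the next dyadic block and this only costs a bounded factor. I would then apply Lemma~\ref{lem:1} pointwise, with $a_{n_1,n_2}=\calF^{-1}\big(m_{n_1,n_2}\calF f\big)(x)$ and $n_1^0=n_2^0=0$, to obtain a pointwise bound of $\sup_{n_1,n_2}\abs{a_{n_1,n_2}}$ by four terms: the full double sum over $i_1,i_2$ of $\ell^2$-norms in $j_1,j_2$ of block sums $\sum_{k_1\in I_{j_1}^{i_1}}\sum_{k_2\in I_{j_2}^{i_2}}\Delta_{k_1,k_2}(a)$; two ``edge'' sums involving single differences $\Delta^1_{k_1,0}(a)$ and $\Delta^2_{0,k_2}(a)$; and the constant term $\abs{a_{0,0}}$.

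Next I would take $L^2(\RR^2)$ norms and handle each of the four terms. For the main term, after using the triangle inequality to pull the double sum $\sum_{i_1=0}^{s_1}\sum_{i_2=0}^{s_2}$ outside, I am left for each fixed $(i_1,i_2)$ with
\[
\Big\|\Big(\sum_{j_1,j_2}\big|\calF^{-1}\big(M^{i_1,i_2}_{j_1,j_2}\calF f\big)\big|^2\Big)^{1/2}\Big\|_{L^2},
\]
where $M^{i_1,i_2}_{j_1,j_2}=\sum_{k_1\in I_{j_1}^{i_1}}\sum_{k_2\in I_{j_2}^{i_2}}\Delta_{k_1,k_2}(m)$. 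By Plancherel this square-function norm equals $\big(\int_{\RR^2}\sum_{j_1,j_2}\abs{M^{i_1,i_2}_{j_1,j_2}(\xi)}^2\abs{\calF f(\xi)}^2\,d\xi\big)^{1/2}$, and the key point is that for a fixed $\xi$ the $(j_1,j_2)$-blocks $I_{j_1}^{i_1}\times I_{j_2}^{i_2}$ are \emph{disjoint}, so
\[
\sum_{j_1,j_2}\abs{M^{i_1,i_2}_{j_1,j_2}(\xi)}^2
\le\Big(\sum_{j_1,j_2}\sum_{k_1\in I_{j_1}^{i_1}}\sum_{k_2\in I_{j_2}^{i_2}}\abs{\Delta_{k_1,k_2}(m)(\xi)}\Big)^2
\le\Big(\sum_{n_1,n_2\in\NN}\abs{\Delta_{n_1,n_2}(m)(\xi)}\Big)^2\le B^2
\]
by hypothesis \eqref{eq:6.1}. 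Thus each fixed-$(i_1,i_2)$ piece is bounded by $B\|f\|_{L^2}$, and summing over the $(s_1+1)(s_2+1)=O(\log N_1)(\log N_2)$ pairs gives the desired $CB(\log N_1)(\log N_2)\|f\|_{L^2}$. The two edge terms are handled identically but with only a single sum: e.g.\ for the $\Delta^1$-term the blocks $I_{j_1}^{i_1}$ are disjoint in $k_1$ and hypothesis \eqref{eq:6.2} gives $\sum_{j_1}\abs{\sum_{k_1\in I_{j_1}^{i_1}}\Delta^1_{k_1,0}(m)(\xi)}^2\le B^2$, so each of the $s_1+1$ terms contributes $\lesssim B\|f\|_{L^2}$ and the total is $\lesssim B(\log N_1)\|f\|_{L^2}$; symmetrically for $\Delta^2$. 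Finally the constant term is $\|\calF^{-1}(m_{0,0}\calF f)\|_{L^2}\le B\|f\|_{L^2}$ by \eqref{eq:6.4} and Plancherel.

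The only mild subtlety — and what I would flag as the main point to get right — is the interchange of the $\ell^2_{j}$ square function with the sums over $i_1,i_2$ in the main term: one must apply Minkowski's integral/sum inequality to move $\sum_{i_1}\sum_{i_2}$ out through the $(\sum_{j_1,j_2}|\cdot|^2)^{1/2}$, exactly as in the last display of the proof of Lemma~\ref{lem:1}, rather than trying to sum inside. Once that is arranged, every remaining estimate is a one-line Plancherel computation using the disjointness of dyadic blocks together with the three $\ell^1$-type hypotheses \eqref{eq:6.1}--\eqref{eq:6.4}; there is no oscillatory or stationary-phase input needed here, the whole content being the combinatorial decomposition already packaged in Lemma~\ref{lem:1}.
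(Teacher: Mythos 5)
Your proposal is correct and follows essentially the same route as the paper's proof: reduce to dyadic $N_1,N_2$, apply Lemma~\ref{lem:1} pointwise with $n_1^0=n_2^0=0$, take $L^2$ norms, and bound each of the four resulting terms by Plancherel together with the $\ell^1$ hypotheses \eqref{eq:6.1}--\eqref{eq:6.4} and the disjointness of the dyadic blocks. The only cosmetic difference is that the paper expands $\|\sum_{p_1,p_2}\calF^{-1}(\Delta_{p_1,p_2}(m)\calF f)\|_{L^2}^2$ as a double sum over pairs $(p_1,q_1)$, $(p_2,q_2)$ before invoking \eqref{eq:6.1}, whereas you use the equivalent pointwise bound $\sum_{j_1,j_2}|M^{i_1,i_2}_{j_1,j_2}(\xi)|^2\le\big(\sum_{n_1,n_2}|\Delta_{n_1,n_2}(m)(\xi)|\big)^2\le B^2$ directly; and the Minkowski step you flag is, in $L^2$, just Fubini followed by the triangle inequality, which is exactly what the paper does in passing to display~\eqref{eq:5}.
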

\begin{proof}
	We may assume that $N_1 = 2^{s_1}$, $N_2 = 2^{s_2}$, for some $s_1, s_2 \in \NN$. Then,
	by Lemma \ref{lem:1} we can estimate
	\begin{multline}
		\label{eq:5}
		\Big\lVert
		\sup_{0\le n_1< N_1}\sup_{0\le n_2< N_2}
		\big|\calF^{-1}\big(m_{n_1,n_2} \calF f\big)\big|
		\Big\rVert_{L^2}\\
		\lesssim
		\sum_{i_1 = 0}^{s_1} \sum_{i_2 = 0}^{s_2}
		\Big(
		\sum_{j_1 = 1}^{2^{s_1-i_1}} \sum_{j_2 = 1}^{2^{s_2-i_2}}
		\Big\lVert
		\sum_{p_1 \in I_{j_1}^{i_1}} \sum_{p_2 \in I_{j_2}^{i_2}}
		\calF^{-1}\big(\Delta_{p_1,p_2}(m) \calF f\big)
		\Big\rVert_{L^2}^2
		\Big)^{1/2}\\
		+
		\sum_{i_1 = 0}^{s_1}
		\Big(
		\sum_{j_1 = 1}^{2^{s_1-i_1}}
		\Big\lVert
		\sum_{p_1 \in I_{j_1}^{i_1}} \calF^{-1}\big(\Delta^1_{p_1, 0}(m) \calF f\big)
		\Big\rVert_{L^2}^2
		\Big)^{1/2}\\
		+
		\sum_{i_2 = 0}^{s_2}
		\Big(
		\sum_{j_2 = 1}^{2^{s_2-i_2}}
		\Big\lVert
		\sum_{p_2 \in I_{j_2}^{i_2}} \calF^{-1}\big(\Delta^2_{0,p_2}(m) \calF f\big)
		\Big\rVert_{L^2}^2
		\Big)^{1/2}
		+ \lVert \calF^{-1}\big(m_{0,0} \calF f \big)\rVert_{L^2}.
	\end{multline}
	Let us fix $i_1 \in \{0, \ldots, s_1\}$ and $i_2 \in \{0, \ldots, s_2\}$. By Plancherel's
	theorem and \eqref{eq:6.1}, for any $j_1 \in \big\{1, \ldots, 2^{s_1-i_1}\big\}$ and
	$j_2 \in \big\{1, \ldots, 2^{s_2-i_2}\big\}$ we have
	\begin{multline*}
		\Big\lVert
		\sum_{p_1 \in I_{j_1}^{i_1}} \sum_{p_2 \in I_{j_2}^{i_2}}
		\calF^{-1}\big(\Delta_{p_1,p_2} (m) \calF f\big)
		\Big\rVert_{L^2}^2
		\leq
		\sum_{p_1,q_1 \in I_{j_1}^{i_1}} \sum_{p_2, q_2 \in I_{j_2}^{i_2}}
		\int \sabs{\Delta_{p_1,p_2}(m)(\xi)} \cdot
		\sabs{\Delta_{q_1,q_2} (m) (\xi)} \cdot \sabs{\calF f(\xi)}^2 {\: \rm d}\xi\\
		\leq
		B
		\sum_{p_1 \in I_{j_1}^{i_1}} \sum_{p_2 \in I_{j_2}^{i_2}}
		\int \sabs{\Delta_{p_1,p_2}(m)(\xi)} \cdot
		\sabs{\calF f(\xi)}^2 {\: \rm d}\xi.
	\end{multline*}
	Hence,
        \begin{multline*}
        \sum_{j_1 = 1}^{2^{s_1-i_1}} \sum_{j_2=1}^{2^{s_2-i_2}}
		\Big\lVert
		\sum_{p_1 \in I_{j_1}^{i_1}} \sum_{p_2 \in I_{j_2}^{i_2}}
		\calF^{-1}\big(\Delta_{p_1,p_2} (m) \calF f\big)
		\Big\rVert_{L^2}^2\\
		\leq
		B
		\sum_{j_1 = 1}^{2^{s_1-i_1}} \sum_{j_2=1}^{2^{s_2-i_2}}
		\sum_{p_1 \in I_{j_1}^{i_1}} \sum_{p_2 \in I_{j_2}^{i_2}}
		\int \sabs{\Delta_{p_1,p_2}(m)(\xi)} \cdot
		\sabs{\calF f(\xi)}^2 {\: \rm d}\xi
    \end{multline*}
	what is bounded by $B^2 \vnorm{f}_{L^2}^2$. By summing up over $i_1$ and $i_2$ one can
	shows that the first term in \eqref{eq:5} is bounded by $B s_1 s_2 \vnorm{f}_{L^2}$.
	Similar arguments based on \eqref{eq:6.2} allows to bound the second and the third term
	in \eqref{eq:5}.
\end{proof}

\begin{remark}
	For the proof of Theorem \ref{th:1}, it is enough to assume that the square functions
	\begin{equation}
		\calS f
		=
		\Big(
		\sum_{n_1,n_2 \in \NN} \big\lvert \calF^{-1}\big(\Delta_{n_1,n_2}(m) \calF f\big)\big\rvert^2
		\Big)^{1/2},
	\end{equation}
	and
	\begin{equation}
		\calS^1 f
		=
		\Big(
		\sum_{n_1 \in \NN} \big\lvert \calF^{-1}\big(\Delta^1_{n_1, 0}(m) \calF f\big) \big\rvert^2
		\Big)^{1/2},
		\qquad
		\calS^2 f
		=
		\Big(
		\sum_{n_2 \in \NN} \big\lvert \calF^{-1}\big(\Delta^2_{0, n_2}(m) \calF f\big) \big\rvert^2
		\Big)^{1/2},
	\end{equation}
	are bounded on $L^2\big(\RR^2\big)$.
\end{remark}

\section{Some $L^p$ estimates}
This section is devoted to study some general estimates which may be interesting in their
own rights. The argument in the proof of Theorem \ref{th:2} is inspired by an observation
due to Zygmund \cite[p. 164]{zyg2} (see also Sj\"olin \cite[Theorem 7.3]{sjol}).
\begin{theorem}
	\label{th:2}
	For each $p \in (1, \infty)$ there is a constant $C_p > 0$ such that for all
	$f \in L^1\big(\RR^2\big) \cap L^p\big(\RR^2\big)$
	\begin{equation}
		\label{eq:10}
        \Big\lVert
        \sup_{n_1, n_2\in\ZZ}
		\big|\calF^{-1}\big(\ind{R_{n_1,n_2}} \calF f\big)\big|
        \Big\rVert_{L^p}
		\leq
		C_p
		\vnorm{f}_{L^p}
	\end{equation}
	where $R_{n_1, n_2} = A_{n_1} \times A_{n_2}$.
\end{theorem}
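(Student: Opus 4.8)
The plan is to dominate the two-parameter maximal multiplier pointwise by a finite sum of iterated one-dimensional Hardy--Littlewood maximal operators composed with Hilbert transforms, and then invoke classical $L^p$ theory. Write $S_n^{(1)}$, $S_n^{(2)}$ for the Fourier multiplier operators with symbol $\ind{A_n}$ acting on the first, respectively the second, variable of a function on $\RR^2$, so that $\calF^{-1}\big(\ind{R_{n_1,n_2}}\calF f\big) = S_{n_1}^{(1)} S_{n_2}^{(2)} f$ and the two families commute. Let $M_1,M_2$ be the Hardy--Littlewood maximal operators and $H_1,H_2$ the Hilbert transforms acting in the first, respectively second, variable. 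By density we may assume $f$ is a Schwartz function, so that all the operators below are defined pointwise a.e.; the bound in terms of $\vnorm{f}_{L^p}$ then extends to $L^1\cap L^p$.

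The one-dimensional input is the observation of Zygmund \cite[p.~164]{zyg2} (see also \cite[Theorem~7.3]{sjol}), which in our notation reads
\[
	\sup_{n \in \ZZ} \big\lvert \calF^{-1}\big(\ind{A_n}\calF g\big) \big\rvert
	\le
	C\big( M g + M (H g)\big) \qquad\text{a.e. on }\RR.
\]
I would derive it from the identity $\ind{A_n} = \ind{(-a_n,\infty)} - \ind{(a_n,\infty)}$ with $a_n = 2^{-n-1}$, together with $\ind{(b,\infty)}(\xi) = \tfrac12\big(1 + \operatorname{sgn}(\xi - b)\big)$: this gives $\calF^{-1}\big(\ind{A_n}\calF g\big) = \tfrac{c}{2}\big(M_{-a_n} H M_{a_n} g - M_{a_n} H M_{-a_n} g\big)$ for a unimodular constant $c$, where $M_b$ denotes modulation by $e^{2\pi i b x}$, so that it suffices to bound $\sup_n \big\lvert H M_{\pm a_n} g\big\rvert$. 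Splitting the Hilbert kernel into dyadic annuli and writing $e^{2\pi i a_n y} = e^{2\pi i a_n x} e^{-2\pi i a_n (x - y)}$, the annuli of radius $\lesssim a_n^{-1}$ sum to a truncation of the Hilbert transform of $g$, controlled by the maximal truncated Hilbert transform and hence by $C\big(Mg + M(Hg)\big)$ via Cotlar's inequality, while the annuli of radius $\gtrsim a_n^{-1}$ sum to convolution with an $L^1$-normalised bump at scale $a_n$ (here one uses $\int \psi(t)\, t^{-1}\, dt = 0$), hence are $\le C M g$ uniformly in $n$. This gives the displayed bound.

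To pass to two parameters, fix $n_1$ and apply the one-dimensional bound in the second variable to $g = S_{n_1}^{(1)} f(x_1,\cdot)$, using that $H_2$ commutes with $S_{n_1}^{(1)}$:
\[
	\sup_{n_2} \big\lvert S_{n_2}^{(2)} S_{n_1}^{(1)} f\big\rvert
	\le
	C\big( M_2\big[S_{n_1}^{(1)} f\big] + M_2\big[ S_{n_1}^{(1)} H_2 f\big]\big).
\]
The key elementary point is that, since $\big\lvert S_{n_1}^{(1)} h\big\rvert \le \sup_{m}\big\lvert S_m^{(1)} h\big\rvert =: T^{*}h$ pointwise, averaging in the first variable gives $M_2\big[S_{n_1}^{(1)} h\big] \le M_2\big[T^{*} h\big]$ uniformly in $n_1$; taking the supremum over $n_1$ we obtain
\[
	\sup_{n_1, n_2}\big\lvert S_{n_1}^{(1)} S_{n_2}^{(2)} f\big\rvert
	\le
	C\big( M_2 T^{*} f + M_2 T^{*} H_2 f\big).
\]
Applying the one-dimensional bound a second time, now in the first variable, to estimate $T^{*}h \le C\big(M_1 h + M_1 H_1 h\big)$, and using the monotonicity of $M_2$, the right-hand side is dominated by $C\big( M_1 M_2 f + M_1 M_2 H_1 f + M_1 M_2 H_2 f + M_1 M_2 H_1 H_2 f \big)$. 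Since $M_1 M_2$, $H_1$, $H_2$, $H_1 H_2$ are all bounded on $L^p\big(\RR^2\big)$ for $1 < p < \infty$, taking $L^p$ norms completes the proof.

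The main obstacle is the one-dimensional estimate above, i.e. trading the sharp frequency cut-off $\ind{A_n}$ for an expression assembled from maximal functions and a single Hilbert transform; everything hinges on handling the jump of $\ind{A_n}$ at the endpoints $\pm 2^{-n-1}$, which is precisely Zygmund's device (Cotlar's inequality for the maximal truncated Hilbert transform together with an oscillatory remainder bounded by the Hardy--Littlewood maximal function). Once this is in place, the two-parameter passage is routine, the only point needing care being that the supremum over one parameter has to be absorbed into a Hardy--Littlewood maximal operator acting in the other variable before the second parameter is treated.
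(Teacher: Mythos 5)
Your overall strategy --- dominate the two-parameter maximal multiplier pointwise by iterated compositions of Hardy--Littlewood maximal operators and Hilbert transforms --- is structurally different from the paper's, and the two-parameter iteration itself is fine: absorbing $\sup_{n_1}$ into $M_2$ via monotonicity, then applying the one-dimensional bound in the first variable, correctly reduces the problem to $L^p$ boundedness of $M_1M_2$, $H_1$, $H_2$, $H_1H_2$. Unfortunately the one-dimensional input you rely on is false. The pointwise estimate
\[
  \sup_{n\in\ZZ}\big|\calF^{-1}\big(\ind{A_n}\calF g\big)\big|
  \;\le\;
  C\big(Mg + M(Hg)\big)\qquad\text{a.e.}
\]
does not hold, and the place where your sketch breaks down is exactly the ``far'' part of the modulated Hilbert kernel: the kernel $t^{-1}e^{\pm 2\pi i a_n t}(1-\varphi(a_n t))$ decays only like $|t|^{-1}$, is not $L^1$, and does \emph{not} have vanishing integral (its Fourier transform at $0$ is $\psi(1)\ne 0$ in your notation), so the claim that the far annuli ``sum to an $L^1$-normalised bump at scale $a_n$'' is incorrect.

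For a concrete counterexample, fix a large dyadic $R$ and a small $\varepsilon>0$ with $R\gg\varepsilon^{-2}$, take a smooth bump $\psi\ge0$ supported in $(\varepsilon,1)$ with $\psi\equiv 1$ on $(2\varepsilon,1/2)$, and set $g(t)=e^{2\pi i R t}\psi(t)$. Since $\widehat g$ is concentrated on frequencies near $R>0$, one has $Hg\approx -i g$, so $M g(0)+M(Hg)(0)\lesssim M\psi(0)\lesssim 1$, uniformly in $\varepsilon$ and $R$. On the other hand a direct computation with the Dirichlet kernel gives
\[
  S_R g(0)
  = \frac{1}{2\pi i}\int \frac{e^{4\pi i R t}-1}{t}\,\psi(t)\,dt
  = -\frac{1}{2\pi i}\int_\varepsilon^1 \frac{\psi(t)}{t}\,dt + O\!\big(\varepsilon^{-2}R^{-1}\big),
\]
so $\big|S_R g(0)\big|\gtrsim \log(1/\varepsilon)$, which can be made arbitrarily large while $Mg(0)+M(Hg)(0)$ stays bounded. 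So the displayed pointwise inequality fails, and with it the two-parameter conclusion as you derived it.

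The device that is actually at the heart of Zygmund's observation (and of the paper's proof) is a preliminary Littlewood--Paley \emph{parity} decomposition: one first writes $f=\sum_{\delta\in\{0,1\}^2}\calF^{-1}(m_\delta\calF f)$ with $m_\delta=m_{\delta_1}\otimes m_{\delta_2}$ and $m_0,m_1$ the multipliers collecting dyadic annuli of even, respectively odd, index. On each parity class the Fourier support avoids the boundary annuli of half the cut-offs, and then the identity $\int_D^{D'}\ind{[-a,a]}\,da=(D'-D)\ind{[-D,D]}$ (valid off the boundary annulus) lets one replace the sharp cut-off $\ind{R_{n_1,n_2}}$ by a combination of Fej\'er means $2\sigma_{2D}-\sigma_D$, which are positive $L^1$-normalised kernels and hence pointwise dominated by the iterated Hardy--Littlewood maximal function --- no Hilbert transform, no Cotlar. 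That parity reduction is precisely the ingredient your argument is missing; if you insert it before applying your two-parameter iteration, the pointwise bound you need becomes $\sup_n|\calF^{-1}(\ind{A_n}\calF g_\delta)|\lesssim Mg_\delta$ for each parity piece $g_\delta$, and the $L^p$ bound then follows from the $L^p$ boundedness of the Littlewood--Paley multipliers $m_\delta$, which is established via the $\ind{[a,b]}=\tfrac{i}{2}(M_a\mathcal H M_{-a}-M_b\mathcal H M_{-b})$ identity you also quote.
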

\begin{proof}
	Let us recall the operators defined by multipliers $m_0$ and $m_1$ where
    \begin{align*}
        m_0 = \sum_{n \in 2 \ZZ} \big(\ind{A_{n-1}} - \ind{A_n}\big), \qquad
		m_1 = \sum_{n \in 2 \ZZ + 1} \big(\ind{A_{n-1}} - \ind{A_n}\big)
    \end{align*}
	are bounded on $L^p(\RR)$ for all $p\in(1, \infty)$. Indeed, it follows from the
	Littlewood--Paley theory that if $\mathfrak m \in L^{\infty}(\RR)$ and
	$\mathfrak m_j(\xi)=\mathfrak m(\xi)\ind{I_j}(\xi)$ where $I_j=A_{j-1} \setminus A_j$ then there is a
	constant $C_p^1>0$ such that all $f\in L^p(\RR)$
	\[
		\big\|\calF^{-1}\big(\mathfrak m \calF f\big)\big\|_{L^p}\le C_p^1\|f\|_{L^p},
	\]
	if and only if for some $C_p^2>0$ and all sequences $\big(f_j: j\in\ZZ\big)$ of functions
	in $L^p(\RR)$
	\[
		\Big\|
		\Big(\sum_{j\in\ZZ}\big|\calF^{-1}\big(\mathfrak m_j\calF f_j\big)\big|^2
		\Big)^{1/2}
		\Big\|_{L^p}
		\le C_p^2\Big\|\Big(\sum_{j\in\ZZ}|f_j|^2\Big)^{1/2}\Big\|_{L^p}.
	\]
	Moreover, the operator corresponding to the multiplier $\ind{[a, b]}$ is equal to
	
\[ i/2 \big(M_a\mathcal H M_{-a}-M_b\mathcal H M_{-b}\big),\]
 where $\mathcal H$ denotes the
	Hilbert transform and $M_af(x)=e^{2\pi i a x}f(x)$. Therefore, by applying vector-valued
	inequality for multipliers $\mathfrak m=m_0$ or $\mathfrak
        m=m_1$ one obtains the desired $L^p$ bounds.

        For two functions $a$ and $b$ we define $(a \otimes b)(x,y) = a(x) b(y)$.
        Hence, for any $\delta=\big(\delta_1, \delta_2\big) \in \{0, 1\}^2$ the multiplier
	\[
		m_{\delta} = m_{\delta_1} \otimes m_{\delta_2}
	\]
	is bounded on $L^p\big(\RR^2\big)$  for all $p\in(1, \infty)$. Since $m_{00} + m_{01}  + m_{10} + m_{11} \equiv 1$
	it is enough to prove \eqref{eq:10} for $\calF^{-1}\big( m_{\delta} \calF f\big)$ instead
	of $f$. Without loss of generality, we may assume that $\delta = (0,1)$, i.e.
	$f \in L^1\big(\RR^2\big) \cap L^p\big(\RR^2\big)$ and
    \begin{align}
		\label{eq:13}
		\supp \calF f \subseteq \bigcup_{n_1 \in 2 \ZZ}\bigcup_{n_2 \in 2 \ZZ+1}
		\big(A_{n_1-1} \setminus A_{n_1} \big) \times \big(A_{n_2-1} \setminus A_{n_2}\big).
    \end{align}
	Next, for any $D > 0$ and $x \in \RR$ we have
    \begin{align}
        \label{eq:15}
        \int_0^D \ind{[-a, a]}(x) {\: \rm d}a =
		\begin{cases}
			D - \abs{x} & \text{ if } \abs{x} \leq D,\\
			0 & \text{otherwise.}
		\end{cases}
    \end{align}
	Hence, if $\abs{x} \leq D < D'$ we obtain
	$$
	\int_D^{D'} \ind{[-a, a]}(x) {\: \rm d} a =(D' - D)\ind{[-D, D]}(x).
	$$
       % Hence, if $D<D'$ then
       % \[
       % \int_D^{D'} \ind{[-a, a]}(x) {\: \rm d} a = (D' - D)\ind{[-D,
       %   D]}(x)+(D'-|x|)\ind{[-D', -D)\cup(D, D']}(x).
       % \]
        This implies, in view of \eqref{eq:13}, that for any $n_1, n_2 \in \ZZ$
	\begin{equation*}
		\ind{R_{n_1,n_2}}(\xi) \cdot \calF f(\xi) =
		\bigg(\frac{1}{D_{n_1}^1 D_{n_2}^2}
		\int_{D_{n_1}^1}^{D_{n_1}^1}
		\int_{D_{n_2}^2}^{D_{n_2}^2}
		\ind{[-a, a]}(\xi_1)
		\ind{[-b, b]}(\xi_2)
		{\: \rm d}a {\: \rm d}b\bigg)\cdot
		\calF f(\xi)
	\end{equation*}
	where $D_n^1 = 2^{-2 \lceil (n+1)/2 \rceil}$ and $D_n^2 =
        2^{-2 \lfloor (n+1)/2 \rfloor-1}$ for $n\in\ZZ$. Therefore,
    \[
		\calF^{-1} \big(\ind{R_{n_1,n_2}} \calF f \big)
		=
		\Big(\big(2 \sigma_{2D_{n_1}^1} - \sigma_{D_{n_1}^1}\big)
		\otimes
		\big(2 \sigma_{2D_{n_2}^2} - \sigma_{D_{n_2}^2}\big)\Big)
		* f
	\]
	where $\sigma_D$ is a continuous Fej\'{e}r kernel, defined for $x \in \RR$ by
	\[
		\sigma_D(x) = \frac{1}{D} \bigg(\frac{\sin(\pi D x)}{\pi x}\bigg)^2.
	\]
	Let $\mathcal Mf(x)=\sup_{n\in \ZZ} |M_{2^n}f(x)|$ be the Hardy--Littlewood maximal
	function where
	\begin{align*}
		M_{D}f(x) =
		\frac{1}{2D}
		\int_{-D}^{D}
		f(x - y) {\: \rm d} y
	\end{align*}
	for $x \in \RR$ and $D>0$. Since for every $g\in L^p(\RR)$ we have the following pointwise bound
	\[
		\sup_{n \in \ZZ} |\sigma_{2^n}*g(x)|
		\lesssim
		\mathcal M(|g|)(x),
	\]
	we immediately conclude that
	\[
		\sup_{n_1, n_2\in\ZZ}
		\big|
		\calF^{-1} \big(\ind{R_{n_1,n_2}} \calF f \big)(x_1, x_2)\big|
		\lesssim
		\calM\big(g_{x_1}\big) (x_2),
	\]
	where $g_{x_1}(y) = \calM \big(f(\: \cdot \:,
        y)\big)(x_1)$. The remaining three cases for $\delta\in\{0,
        1\}^2\setminus\{(0, 1)\}$ can be  proved analogously and this completes the proof of Theorem \ref{th:2}.
\end{proof}
Next, we establish a variant of Theorem \ref{th:2} with the oscillation seminorm rather than supremum.
In the one-parameter theory the supremum is controlled from above by the oscillation seminorm. In the
two-parameter setup we do not have this property anymore so we have to provide a second proof.
\begin{theorem}
	\label{th:4}
	Let $p \in (1, \infty)$ and $\big(N_k: k\in\NN\big)$ be a lacunary sequence. Then there is
	a constant $C_p > 0$ such that for all $f \in L^1\big(\RR^2\big) \cap L^p\big(\RR^2\big)$
	\begin{equation*}
		\big\lVert
        \calO\big(
		\calF^{-1}\big(\ind{R_{n_1,n_2}} \calF f
		\big): n_1, n_2\in\NN_0\big)
        \big\rVert_{L^p}
		\leq
		C_p
		\vnorm{f}_{L^p}.
	\end{equation*}
\end{theorem}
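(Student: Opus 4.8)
The plan is to mimic the proof of Theorem \ref{th:2} but to replace the final maximal-function estimate by a vector-valued oscillation estimate. As in that proof, I would first decompose $f$ using the multipliers $m_\delta = m_{\delta_1}\otimes m_{\delta_2}$, $\delta\in\{0,1\}^2$, which are bounded on $L^p(\RR^2)$; since $m_{00}+m_{01}+m_{10}+m_{11}\equiv 1$ it suffices to treat, say, $\delta=(0,1)$, so we may assume \eqref{eq:13}. Then, exactly as before, for $n_1,n_2\in\NN_0$ we obtain the identity
\[
	\calF^{-1}\big(\ind{R_{n_1,n_2}}\calF f\big)
	=
	\Big(\big(2\sigma_{2D_{n_1}^1}-\sigma_{D_{n_1}^1}\big)\otimes
	\big(2\sigma_{2D_{n_2}^2}-\sigma_{D_{n_2}^2}\big)\Big)*f,
\]
with $D_n^1=2^{-2\lceil(n+1)/2\rceil}$ and $D_n^2=2^{-2\lfloor(n+1)/2\rfloor-1}$. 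Since $n\mapsto D_n^i$ runs through a lacunary (geometric) sequence and the map $n\mapsto N_n$ is lacunary as well, the composition stays lacunary, so it is enough to control the oscillation seminorm of the two-parameter family $\big((\sigma_{t}\otimes\sigma_{u})*f\big)$ indexed by a pair of lacunary scales $t=2^{-j_1}$, $u=2^{-j_2}$.

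The key device is to telescope across the two parameters so as to reduce a two-parameter oscillation to one-parameter oscillations. Writing $a_{n_1,n_2}=(\sigma_{t(n_1)}\otimes\sigma_{u(n_2)})*f$ and using
\[
	a_{n_1,n_2}-a_{N_k,N_k}
	=\big(a_{n_1,n_2}-a_{N_k,n_2}\big)+\big(a_{N_k,n_2}-a_{N_k,N_k}\big),
\]
the oscillation splits (by the triangle inequality in $\ell^2_k$) into two terms. For the second term, frozen in the first variable at the scale $t(N_k)$, I would apply, for each fixed $x_1$, the one-parameter oscillation inequality for Fej\'er kernels in the $x_2$ variable — this is precisely the one-parameter analogue whose $L^p$-boundedness is standard (and which I would cite or record as a lemma: for a lacunary sequence the oscillation seminorm of $\big(\sigma_{2^{-j}}*g\big)$ is bounded on $L^p(\RR)$). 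After applying it pointwise in $x_1$, I bound the remaining $x_1$-convolution by $\calM(|f(\cdot,x_2)|)(x_1)$ as in Theorem \ref{th:2}, and then use the boundedness of $\calM$ on $L^p(\RR)$. For the first term I would do the symmetric manoeuvre: freeze the second variable at scale $u(n_2)$, apply the one-parameter oscillation estimate in $x_1$, and then — and here is where Fefferman--Stein enters — I am left with a supremum over $n_2$ of an $\ell^2_k$-valued expression in the $x_2$ variable, which I control by the Fefferman--Stein vector-valued maximal inequality for the Hardy--Littlewood maximal operator on $L^p(\RR;\ell^2)$.

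The main obstacle is organizing the double telescoping so that, in each of the two resulting pieces, only \emph{one} genuine oscillation estimate has to be invoked while the other parameter is handled by a (vector-valued) maximal function: a naive approach would try to run oscillation estimates in both variables simultaneously and get stuck, since the oscillation seminorm does not dominate the two-parameter supremum. Once the splitting above is in place, each half is routine: one half needs scalar one-parameter oscillation plus scalar $\calM$; the other needs scalar one-parameter oscillation plus the Fefferman--Stein $\ell^2$-valued $\calM$. The remaining $\delta\in\{0,1\}^2\setminus\{(0,1)\}$ cases follow by the same argument after interchanging the roles of the two coordinates, completing the proof.
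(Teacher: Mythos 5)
Your plan follows the same overall shape as the paper's proof: reduce via the $m_\delta$ decomposition and \eqref{eq:13} to the Fej\'er-kernel representation, telescope the two-parameter oscillation into two one-parameter pieces, and push a maximal function across the frozen variable via the Fefferman--Stein vector-valued inequality. Two issues, one substantial and one smaller, keep this from being a complete proof.

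The substantial gap is that you treat the one-parameter oscillation estimate for Fej\'er kernels,
\[
	\big\lVert \calO\big(\sigma_{s_n} * g : n \in \NN_0\big) \big\rVert_{L^p}
	\lesssim
	\vnorm{g}_{L^p},
\]
as ``standard'' and propose to cite or record it as a lemma. This is in fact where most of the work in the paper's proof lives, and it is not off-the-shelf: what is available in the literature is the oscillation inequality for the dyadic Hardy--Littlewood averages $M_{2^n}g$ (from \cite{jkrw}). To pass from that to the Fej\'er kernel version the paper compares $\sigma_{s_n}$ with $K_{2^n}=(2\cdot 2^n)^{-1}\ind{[-2^n,2^n]}$, bounds the difference by a square function
\[
	\Big\|\Big(\sum_{n\in\NN_0}\big|(\sigma_{s_n}-K_{2^n})*g\big|^2\Big)^{1/2}\Big\|_{L^p},
\]
decomposes via Littlewood--Paley projections $S_{n+j}$, proves a cheap $L^p$ bound with Fefferman--Stein plus the Littlewood--Paley square function, a decaying $L^2$ bound using the Fourier symbol estimate \eqref{eq:38}, and interpolates to get geometric decay in $j$. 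None of this is cited from elsewhere; you would need to carry it out.

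The smaller error is the claim that one of the two telescoped halves ``needs scalar one-parameter oscillation plus scalar $\calM$'' only. In your splitting the second half is
\[
	a_{N_k, n_2} - a_{N_k, N_k}
	=
	\big(\sigma_{t(N_k)} \otimes (\sigma_{u(n_2)}-\sigma_{u(N_k)})\big) * f,
\]
and the first-variable kernel $\sigma_{t(N_k)}$ depends on $k$. After you bound pointwise
\[
	\sup_{n_2}\big|\sigma_{t(N_k)}*_1 \,(\cdots)\big|
	\leq
	\calM\Big(\sup_{n_2}\big|(\sigma_{u(n_2)}-\sigma_{u(N_k)})*_2 f(\cdot,x_2)\big|\Big)(x_1),
\]
you are still left with an $\ell^2_k$ sum of maximal functions with $k$-dependent inputs, so the $L^p$ bound again requires the Fefferman--Stein vector-valued inequality, exactly as in the other half. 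There is no way to first run the oscillation estimate pointwise in $x_1$ and then apply a scalar $\calM$, because the $x_1$-convolution sits \emph{inside} the $\ell^2_k$-sum with a $k$-dependent scale. The paper indeed treats both halves symmetrically by Fefferman--Stein.
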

\begin{proof}
	We follow the notation from Theorem \ref{th:2}. Again, we may assume that
	$f \in L^1\big(\RR^2\big) \cap L^p\big(\RR^2\big)$ and satisfies \eqref{eq:13}, thus
	\[
        \calF^{-1} \big(\ind{R_{n_1,n_2}} \calF f \big)
		=
		\big(\big(2 \sigma_{2D_{n_1}^1} - \sigma_{D_{n_1}^1}\big)
		\otimes
		\big(2 \sigma_{2D_{n_2}^2} - \sigma_{D_{n_2}^2}\big)\big) * f.
	\]
	%Therefore,
	%\[
	%	\big\|\calO\big(
	%	\seq{\calF^{-1}\big(\ind{R_{n_1,n_2}} \calF f\big)}{n_1, n_2 \in \NN_0}
	%	\big)\big\|_{L^p}
	%	\lesssim
	%	\big\lVert
	%	\calO\big(\big(\sigma_{2^{-n_1}} \otimes \sigma_{2^{-n_2}}\big) * f: n_1, n_2 \in \NN_0\big)
	%	\big\rVert_{L^p}.
	%\]
        The proof will be completed if we show that for any $p\in(1,
        \infty)$ there is a constant $C_p>0$ such that
        \[
        \big\lVert\calO\big(\big(\sigma_{s_{n_1}} \otimes \sigma_{t_{n_2}}\big) * f: n_1, n_2 \in \NN_0\big)
		\big\rVert_{L^p}\le C_p \|f\|_{L^p}
        \]
        for any $s_{n_1}\simeq 2^{n_1}$ and  $t_{n_2}\simeq 2^{n_2}$,
        since $D_{n_1}^1\simeq 2^{n_1}$ and   $D_{n_2}^2\simeq 2^{n_2}$.
	We notice that
	\begin{multline}
		\label{eq:51}
		\big\lVert
		\calO\big(\big(\sigma_{s_{n_1}} \otimes \sigma_{t_{n_2}}\big) * f: n_1, n_2 \in \NN_0\big)
		\big\rVert_{L^p} \\
		\le
		\Big\|
		\Big(
		\sum_{k \in \NN}
		\sup_{N_k \leq n_1, n_2 \leq N_{k+1}}
		\big|\big(\sigma_{s_{n_1}} \otimes \big(\sigma_{t_{n_2}} - \sigma_{t_{N_k}}\big)\big) *f \big|^2
		\Big)^{1/2}
		\Big\|_{L^p}\\
		+
		\Big\|\Big(
		\sum_{k\in\NN}
		\sup_{N_k \leq n_1 \leq N_{k+1}}
		\big|\big(\big(\sigma_{s_{n_1}} - \sigma_{s_{N_k}}\big) \otimes \sigma_{t_{N_k}}\big) * f \big|^2
		\Big)^{1/2}
		\Big\|_{L^p}.
	\end{multline}
	Let us consider the first term in \eqref{eq:51}. We have
	\begin{align*}
		\sup_{N_k \leq n_1, n_2 \leq N_{k+1}}
		\big|\big(\sigma_{s_{n_1}} \otimes \big(\sigma_{t_{n_2}} - \sigma_{t_{N_k}}\big)\big) * f(x_1,x_2) \big|
		\leq
		\calM\Big(
		\sup_{N_k \leq n_2 \leq N_{k+1}}
		\big|
		\big(\sigma_{t_{n_2}} - \sigma_{t_{N_k}}\big) *_2 f(\:\cdot\:, x_2)
		\big|
		\Big)(x_1),
	\end{align*}
	where $*_2$  denotes the convolution taken with respect to the second variable. Therefore,
	by the Fefferman--Stein vector-valued inequality (see \cite{fs}, see also \cite[Theorem 4.6.6]{graf1})
	we obtain
	\begin{multline*}
		\Big\|
		\Big(
		\sum_{k \in \NN}
		\sup_{N_k \leq n_1, n_2 \leq N_{k+1}}
		\big|\big(\sigma_{s_{n_1}} \otimes \big(\sigma_{t_{n_2}} - \sigma_{t_{N_k}}\big)\big) *f \big|^2
		\Big)^{1/2}
		\Big\|_{L^p}\\
		\leq
		\Big\|
		\Big(
		\sum_{k \in \NN}
		\calM\Big(
		\sup_{N_k \leq n_2 \leq N_{k+1}}
		\big|
		\big(\sigma_{t_{n_2}} - \sigma_{t_{N_k}}\big) *_2 f(\:\cdot\:, x_2)
		\big|
		\Big)(x_1)^2
		\Big)^{1/2}
		\Big\|_{L^p({\rm d}x_1 {\rm d}x_2)} \\
		\lesssim
		\big\|
		\calO\big(\sigma_{t_{n_2}} *_2 f : n_2 \in \NN_0 \big)
        \big\|_{L^p}
	\end{multline*}
	We may apply analogous argument to the second term in
        \eqref{eq:51}. Therefore, it suffices to show that there is
        $C_p > 0$ such that for all $g\in L^p(\RR)$
	\[
	    \big\|\calO\big(\sigma_{s_n} * g : n \in \NN_0\big)\big\|_{L^p}
		\leq
		C_p
		\vnorm{g}_{L^p}.
	\]
	Let us observe that
	\[
		\big\|\calO\big(\sigma_{s_n} * g : n \in \NN_0\big) \big\|_{L^p}
		\leq
		\big\|\calO\big(M_{2^n} g : n \in \NN_0\big)\big\|_{L^p}
		+
		\Big\|
		\Big(
		\sum_{n \in \NN_0} \big|\sigma_{s_n} * g - M_{2^n}g \big|^2
		\Big)^{1/2}
		\Big\|_{L^p}.
	\]
	According to \cite{jkrw}, we have
	\[
		\big\|\calO\big(M_{2^n} g : n \in \NN_0\big) \big\|_{L^p}\
		\lesssim
		\vnorm{g}_{L^p}.
	\]
	Hence, it suffices to show that there is a constant $C_p > 0$ such that for all $g \in L^p(\RR)$
	\[
		\Big\|\Big(\sum_{n \in \NN_0} \big|\big(\sigma_{s_n}  - K_{2^n}\big)*g \big|^2 \Big)^{1/2}
		\Big\|_{L^p}
		\leq C_p
		\vnorm{g}_{L^p}
	\]
	where $K_{D} * g = M_{D} g$ and $K_D(x)=(2D)^{-1}\ind{[-D, D]}(x)$. Let $S_j$ be a Littlewood--Paley projection
	$\calF(S_j g)(\xi)=\varphi_j(\xi)\calF g (\xi)$ associated with $\big(\varphi_j: j\in\ZZ\big)$
	a smooth partition of unity of $\RR\setminus\{0\}$ such that
        for each $j \in \ZZ$ we have
	$0\le \varphi_j\le 1$ and
	\[
		\supp{\varphi_j}
		\subseteq
		\big\{\xi\in\RR: 2^{-j-1} < \abs{\xi} < 2^{-j+1} \big\}.
	\]
	Then we have
	\begin{align*}
  		\Big\|
		\Big(\sum_{n \in \NN_0}
		\big|\big(\sigma_{s_n} - K_{2^n} \big) * g\big|^2\Big)^{1/2}
		\Big\|_{L^p}
		\leq
		\sum_{j \in \ZZ}
		\Big\|
		\Big(
		\sum_{n \in \NN_0}
		\big|
		\big(\sigma_{s_n} - K_{2^n}\big) * S_{j+n} g
		\big|^2
		\Big)^{1/2}
		\Big\|_{L^p}.
	\end{align*}
	We claim that there are $C_p > 0$ and $\delta_p > 0$ such that for every $j \in \ZZ$
	\begin{align}
		\label{eq:34}
		\Big\|
		\Big(\sum_{n \in \NN_0}
		\big|\big(\sigma_{s_n} - K_{2^n}\big) * S_{n+j} g \big|^2\Big)^{1/2}
		\Big\|_{L^p}
		\leq
		C_p
		2^{-\delta_p \abs{j}}
		\vnorm{g}_{L^p}.
	\end{align}
	Again, by the Fefferman--Stein vector-valued inequality and the boundedness of the square
	function associated with $(S_j : j \in \ZZ)$ we have
	\begin{multline}
		\label{eq:35}
		\Big\|
		\Big(
		\sum_{n \in \NN_0}
		\big|\big(\sigma_{s_n} - K_{2^n} \big) * S_{n+j} g \big|^2
		\Big)^{1/2}
		\Big\|_{L^p}
		\lesssim
		\Big\|
		\Big(
		\sum_{n \in \NN_0}
		\calM\big(S_{n+j} g\big)^2
		\Big)^{1/2}
		\Big\|_{L^p}\\
		\lesssim
		\Big\|
		\Big(
		\sum_{j \in \ZZ} \big| S_j g \big|^2
		\Big)^{1/2}
		\Big\|_{L^p}
		\lesssim
		\vnorm{g}_{L^p}.
	\end{multline}
	Next, for $p = 2$ we can refine the estimates \eqref{eq:35}. Indeed, by \eqref{eq:15}, we have
	\[
		\calF\sigma_{D}(\xi)=
		\begin{cases}
			1 - D^{-1} \abs{\xi} & \text{ if } \abs{\xi} \leq D,\\
			0 & \text{otherwise,}
		\end{cases}
	\]
	and
	\[
		\calF K_{D}(\xi)=
		\frac{\sin(2\pi i D \xi)}{2\pi D \xi}=1+O\big(D^2 |\xi|^2\big).
	\]
	Therefore, for some $\delta > 0$ we have
	\begin{align}
		\label{eq:38}
		\big|\calF\sigma_{s_n}(\xi)-\calF K_{2^n}(\xi)\big|
		\lesssim
		\min\big\{1, |2^n\xi|^{\delta}, |2^n\xi|^{-\delta}\big\}.
	\end{align}
	Applying Plancherel's theorem we obtain
	\begin{align}
		\label{eq:39}
		\Big\|\Big(\sum_{n \in \NN_0} \big| \big(\sigma_{s_n} - K_{2^n}\big)*S_{n+j} g \big|^2 \Big)^{1/2}
		\Big\|_{L^2}^2
		\lesssim
		2^{-\delta \abs{j}}
		\vnorm{g}_{L^2}^2,
	\end{align}
	since \eqref{eq:38} implies
	\begin{align*}
		\sum_{n\in\NN_0}
		\Big\|
		\big(
		\calF\sigma_{s_n}-\calF K_{2^n}
		\big)
		\varphi_{j+n}
		\calF g
		\Big\|_{L^2}^2
		\lesssim
		2^{-\delta \abs{j}}
		\vnorm{g}_{L^2}^2.
	\end{align*}
	Finally, by interpolation between \eqref{eq:39} and \eqref{eq:35} we obtain \eqref{eq:34}
	which finishes the proof.
\end{proof}

\section{Two-parameter logarithmic lemma}
Let $\Lambda$ be a finite subset of $Q_1^{-1}\ZZ \times Q_2^{-1}\ZZ$ for some
$Q_1,Q_2 \in \NN$. Suppose that for any  $\lambda, \lambda' \in \Lambda$,
if $\lambda \neq \lambda'$ then
\begin{equation}
	\label{eq:3}
	\norm{\lambda - \lambda'}
	= \max\{\abs{\lambda_1 - \lambda_1'}, \abs{\lambda_2 - \lambda_2'}\} \geq 1.
\end{equation}
Let $R_{n_1,n_2}^\lambda = \lambda + A_{n_1} \times A_{n_2}$. We are going to show the following.
\begin{theorem}
	\label{th:3}
	There is a constant $C>0$ such that for all $f \in L^2\big(\RR^2\big)$
	\[
		\Big\lVert\sup_{n_1, n_2 \in \NN_0}
		\Big|\sum_{\lambda \in \Lambda}
		\calF^{-1}\big(
		\ind{R_{n_1,n_2}^\lambda} \calF f \big)
		\Big\lvert
		\Big\rVert_{L^2}
		\leq
		C
		\log \log \big(Q_1 \sqrt{\abs{\Lambda}}\big)
		\cdot \log \log \big(Q_2 \sqrt{\abs{\Lambda}}\big)
		\vnorm{f}_{L^2}.
	\]
	Moreover, for every lacunary sequence $\calN = \big(N_k : k \in \NN\big)$ there is a constant
	$C > 0 $ such that for all $f \in L^2\big(\RR^2\big)$ we have
	\[
		\Big\lVert\calO\Big(\seq{\sum_{\lambda \in \Lambda}
		\calF^{-1}
		\big(\ind{R_{n_1,n_2}^\lambda} \calF f \big)}{n_1, n_2 \in \NN_0}\Big)
		\Big\rVert_{L^2}
		\leq
		C
		\big(\log \log \big(Q_1 \sqrt{\abs{\Lambda}}\big)
		\cdot
		\log \log \big(Q_2 \sqrt{\abs{\Lambda}}\big)\big)^2
		\vnorm{f}_{L^2}.
	\]
\end{theorem}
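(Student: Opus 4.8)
The plan is to prove both estimates of Theorem~\ref{th:3} at once, by splitting the parameter set $\NN_0^2$ at the threshold $w=(w_1,w_2)$ with $w_i=\lceil\log_2\big(Q_i\sqrt{\abs{\Lambda}}\big)\rceil$, using the four regions $U_w^{00},U_w^{01},U_w^{10},U_w^{11}$ from \eqref{eq:11}. Write $T_{n_1,n_2}f=\sum_{\lambda\in\Lambda}\calF^{-1}\big(\ind{R_{n_1,n_2}^\lambda}\calF f\big)$. For the supremum estimate it then suffices to bound $\sup$ of $\abs{T_{n_1,n_2}f}$ over the indices lying in each of the four regions; for the oscillation estimate, Lemma~\ref{lem:3} applied with this $w$ reduces matters to the supremum estimate (which controls the $4\sup$ term) together with the four pieces $\calO_\mu$, $\mu\in\{0,1\}^2$, each of which only involves indices in $U_w^\mu$. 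By the symmetry $(n_1,Q_1)\leftrightarrow(n_2,Q_2)$ the two mixed regions are interchangeable, so three cases remain.

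\emph{Small region $U_w^{00}$.} Here I would invoke the two-parameter Rademacher--Menshov inequality, Theorem~\ref{th:1}, applied to $m_{n_1,n_2}=\sum_{\lambda\in\Lambda}\ind{R_{n_1,n_2}^\lambda}$ over $0\le n_i<w_i$. The key observation is that the separation condition \eqref{eq:3} forces, for each fixed $\xi$, at most one $\lambda\in\Lambda$ to satisfy $\xi\in R_{0,0}^\lambda$; consequently $(n_1,n_2)\mapsto m_{n_1,n_2}(\xi)$ is the indicator of a quadrant $[0,a_1(\xi))\times[0,a_2(\xi))$, and the double difference $\Delta_{n_1,n_2}(m)$ together with the two single differences each have total variation $\le1$, i.e. the hypotheses \eqref{eq:6.1}--\eqref{eq:6.4} hold with $B\lesssim1$. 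Theorem~\ref{th:1} then gives a bound $\lesssim(\log w_1)(\log w_2)\vnorm{f}_{L^2}$, which is the desired one since $w_i\simeq\log\big(Q_i\sqrt{\abs{\Lambda}}\big)$. The oscillation piece $\calO_{00}$ is handled in the same way, using the oscillation analogue of Theorem~\ref{th:1} (deduced from the numerical inequality of Lemma~\ref{lem:1} just as Theorem~\ref{th:1} is), within the same loss.

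\emph{Large region $U_w^{11}$} ($n_1\ge w_1$, $n_2\ge w_2$, so $2^{n_i}\gtrsim Q_i\sqrt{\abs{\Lambda}}$). This is where the rationality and periodicity enter, and I expect the bulk of the technical work. After the anisotropic dilation $x\mapsto(Q_1x_1,Q_2x_2)$ one may assume $Q_1=Q_2=1$, so $\Lambda\subset\ZZ^2$ is $1$-separated and the relevant scales satisfy $2^{n_i}\gtrsim\sqrt{\abs{\Lambda}}\ge1$; the dilation absorbs the $Q_i$ inside the logarithms. Performing the Littlewood--Paley / parity decomposition used in the proof of Theorem~\ref{th:2}, and projecting $\calF f$ onto the disjoint (by \eqref{eq:3}) union $\bigcup_{\lambda\in\Lambda}R_{w_1,w_2}^\lambda$, one writes $T_{n_1,n_2}f=\big(\Phi_{n_1,n_2}K_\Lambda\big)*f$, where $\Phi_{n_1,n_2}$ is a tensor product of Fej\'er-type kernels $2\sigma_{2D}-\sigma_D$ at scale $2^{n_1}\times2^{n_2}$ and $K_\Lambda(x)=\sum_{\lambda\in\Lambda}e^{2\pi i\lambda\cdot x}$ is $1$-periodic in each variable; equivalently $T_{n_1,n_2}f=\sum_{\lambda\in\Lambda}M_\lambda\big(\Phi_{n_1,n_2}*h_\lambda\big)$ with $M_\lambda g(x)=e^{2\pi i\lambda\cdot x}g(x)$, the $h_\lambda$ band-limited to the fixed box $A_{w_1}\times A_{w_2}$, $\sum_\lambda\vnorm{h_\lambda}_{L^2}^2\le\vnorm{f}_{L^2}^2$, and the summands mutually frequency-disjoint. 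Exploiting the periodicity of $K_\Lambda$ together with the fact that the Fej\'er scale $2^{n_i}$ exceeds the period, and the pointwise domination of Fej\'er convolutions by the Hardy--Littlewood maximal function, one reduces $\big\lVert\sup_{n_1\ge w_1,\,n_2\ge w_2}\abs{T_{n_1,n_2}f}\big\rVert_{L^2}$ to the two-parameter Fej\'er maximal estimate, Theorem~\ref{th:2}, getting a bound $\lesssim\vnorm{f}_{L^2}$ uniform in $\abs{\Lambda}$ (and, after undoing the dilation, in $Q_1,Q_2$). For the oscillation piece $\calO_{11}$ one uses the Fefferman--Stein vector-valued inequality to strip off one variable and land on the one-parameter Fej\'er oscillation estimate, which is precisely Theorem~\ref{th:4}; this costs nothing. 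The delicate point here is to run the reduction to Theorems~\ref{th:2}/\ref{th:4} genuinely using the frequency-orthogonality of the boxes $R_{w_1,w_2}^\lambda$, so as to avoid the loss of a power of $\abs{\Lambda}$ that a naive triangle inequality over $\lambda\in\Lambda$ would produce.

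\emph{Mixed region $U_w^{01}$} (and symmetrically $U_w^{10}$): $n_1<w_1$, $n_2\ge w_2$. Here I would combine the two previous arguments: in the large variable $n_2$ run the periodicity-and-Fej\'er analysis of the $U_w^{11}$ case (uniformly in $\abs{\Lambda}$ and $Q_2$), and in the small variable $n_1$ run the one-dimensional Rademacher--Menshov bound \eqref{eq:6} over the $w_1$ scales, at a cost of a factor $\log w_1\simeq\log\log\big(Q_1\sqrt{\abs{\Lambda}}\big)$. This yields the estimate in the mixed regime, with a single iterated-logarithmic loss. Collecting the three regions gives the supremum bound of Theorem~\ref{th:3}; assembling the corresponding oscillation pieces through Lemma~\ref{lem:3} together with the oscillation forms of Theorem~\ref{th:1} and Theorem~\ref{th:4}, and overestimating, gives the claimed $\big(\log\log(Q_1\sqrt{\abs{\Lambda}})\cdot\log\log(Q_2\sqrt{\abs{\Lambda}})\big)^2$ bound.
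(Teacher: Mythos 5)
Your overall decomposition matches the paper's: the threshold $w_i\simeq\log_2\big(Q_i\sqrt{\abs{\Lambda}}\big)$, the four regions of \eqref{eq:11}, Lemma~\ref{lem:3} for the oscillation seminorm, Theorem~\ref{th:1} on $U_w^{00}$, periodicity on $U_w^{11}$, and the combination of \eqref{eq:6} with the one-dimensional version of the large-parameter argument on the mixed region $U_w^{01}$. The $U_w^{00}$ and $U_w^{01}$ pieces are essentially the paper's arguments, modulo the following small deviation: for $\calO_{00}$ and $\calO_{01}$ you invoke a putative ``oscillation analogue of Theorem~\ref{th:1},'' whereas the paper simply observes that lacunarity of $\calN$ caps the number of $k$ with $(N_k,N_k),(N_{k+1},N_{k+1})$ in the relevant region by $\lesssim s_1$ (resp.\ $\lesssim s_1 s_2$) and then bounds each oscillation summand by twice the supremum; this is where the extra square in the oscillation bound comes from, and it is cleaner than trying to derive a new oscillation variant of Lemma~\ref{lem:1}.

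The genuine gap is in the large region $U_w^{11}$, which is the heart of the theorem. You correctly set up $T_{n_1,n_2}f=\sum_\lambda M_\lambda(\Phi_{n_1,n_2}*h_\lambda)$ with the $h_\lambda$ band-limited and frequency-disjoint, and you correctly identify that a naive triangle inequality in $\lambda$ loses a power of $\abs{\Lambda}$. But then you write that ``exploiting the periodicity of $K_\Lambda$ together with \ldots pointwise domination by the Hardy--Littlewood maximal function, one reduces to Theorem~\ref{th:2}'' --- and this is exactly the step that needs a mechanism, not an assertion. The paper's mechanism is an averaging-and-orthogonality argument that your sketch never produces: set $I(x,y)=\sup_{(n_1,n_2)\in U_w^{11}}\abs{\sum_\lambda e^{2\pi i\lambda\cdot x}\calF^{-1}(\ind{R_{n_1,n_2}}\calF f_\lambda)(y)}$ and $J(x,y)=\sum_\lambda e^{2\pi i\lambda\cdot x}f_\lambda(y)$, both $(Q_1,Q_2)$-periodic in $x$. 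One first shows, using $2^{-S_i}Q_i\sqrt{\abs{\Lambda}}\lesssim 1$ and a Plancherel/Lipschitz estimate, that $\vnorm{I(x,x)}_{L^2(\mathrm{d}x)}\lesssim\vnorm{I(x,x+u)}_{L^2(\mathrm{d}x)}+\big(\sum_\lambda\vnorm{f_\lambda}_{L^2}^2\big)^{1/2}$ uniformly in $u\in[0,Q_1]\times[0,Q_2]$; then averages in $u$, uses periodicity to change variables so the two copies of $x$ decouple, applies Theorem~\ref{th:2} for each frozen $u$ to get $\vnorm{I(u,x)}_{L^2(\mathrm{d}x)}\lesssim\vnorm{J(u,x)}_{L^2(\mathrm{d}x)}$, and finally invokes orthogonality of the characters $e^{2\pi i\lambda\cdot u}$ on $[0,Q_1]\times[0,Q_2]$ to evaluate $\int\vnorm{J(u,x)}_{L^2}^2\,\mathrm{d}u=Q_1Q_2\sum_\lambda\vnorm{f_\lambda}_{L^2}^2$. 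Your ``periodicity of $K_\Lambda$ plus Hardy--Littlewood domination'' heuristic does not substitute for this: even after the anisotropic dilation, the pointwise kernel $K_\Lambda(x)\Phi_{n_1,n_2}(x)$ is not dominated by an $\abs{\Lambda}$-independent Hardy--Littlewood majorant (at $x=0$, $K_\Lambda(0)=\abs{\Lambda}$), so a direct maximal-function reduction cannot be uniform in $\abs{\Lambda}$; the uniformity has to come from the averaged/orthogonality structure you flagged as ``delicate'' but did not supply. Since the $U_w^{01}$ region also relies on the one-parameter version of this same mechanism (the paper applies it after reducing via \eqref{eq:6} and Plancherel in the first variable), the gap propagates into the mixed region as well. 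With the averaging argument inserted, the rest of your outline goes through.
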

\begin{proof}
	Let us define $S_1 = 2^{s_1}$ and $S_2 = 2^{s_2}$ where
	$s_1 = \big\lceil \log_2 \log_2 Q_1 \sqrt{\abs{\Lambda}}\big\rceil$ and
	$s_2 = \big\lceil \log_2 \log_2 Q_2 \sqrt{\abs{\Lambda}}\big\rceil$.
	We set $w = (S_1, S_2)$ and split $\NN_0$ into four regions as in \eqref{eq:11}.
	By Lemma \ref{lem:3} we have
	\begin{multline*}
		\calO\Big(\sum_{\lambda \in \Lambda} \calF^{-1}
		\big(\ind{R_{n_1, n_2}^\lambda} \calF f \big) : n_1, n_2 \in \NN_0\Big)\\
		\lesssim
		\sum_{\mu \in \{0,1\}^2}
		\Big(
		\sup_{n_1, n_2 \in U_w^\mu}
		\Big|\sum_{\lambda \in \Lambda} \calF^{-1}\big(\ind{R_{n_1, n_2}} \calF f\big)\Big|
		+
		\calO_\mu\Big(\sum_{\lambda \in \Lambda} \calF^{-1}
		\big(\ind{R_{n_1, n_2}^\lambda} \calF f \big) : n_1, n_2 \in \NN_0\Big)
		\Big).
	\end{multline*}
	Since the regions $U_w^{01}$ and $U_w^{10}$ are mutually symmetric we shall only treat
	with the region $U_w^{01}$. The proof is divided into three parts according to which region, $U_w^{00}$, $U_w^{01}$ or $U_w^{11}$.

	\vspace*{1ex}
	\noindent
	{\bf The region $U_w^{00}$.} Since the elements of $\Lambda$ satisfies  separation condition
	\eqref{eq:3}, for all $n_1, n_2 \in \NN_0$, the rectangles $R_{n_1,n_2}^{\lambda}$ and
    $R_{n_1,n_2}^{\lambda'}$ are disjoint whenever $\lambda \neq \lambda'$. Moreover,
	the function
	\[
		\ind{R_{n_1-1,n_2-1}} - \ind{R_{n_1-1, n_2}} - \ind{R_{n_1,n_2-1}} + \ind{R_{n_1,n_2}}
	\]
	has a support inside $R_{n_1-1, n_2-1} \setminus \big(R_{n_1-1, n_2} \cup R_{n_1, n_2-1}\big)$.
	Thus the sequence $\big(\ind{R_{n_1,n_2}} : n_1,n_2 \in \NN_0\big)$ satisfies
	\eqref{eq:6.1}--\eqref{eq:6.4}, and by Theorem \ref{th:1}, we conclude
    \begin{align}
		\label{eq:43}
		\Big\lVert
		\sup_{(n_1, n_2) \in U_w^{00}}
		\Big|\sum_{\lambda \in \Lambda}
		\calF^{-1}
		\big(\ind{R_{n_1,n_2}^\lambda} \calF f \big)\Big|
		\Big\rVert_{L^2}
		\leq
		C
		s_1 s_2
		\vnorm{f}_{L^2}.
	\end{align}
	For the oscillation seminorm, we set $k_0=\max\big\{k \in \NN: (N_k, N_k),
	(N_{k+1}, N_{k+1}) \in U_w^{00}\big\}$ and observe that by the lacunarity of $\calN$
	\[
		\tau^{k_0} \leq N_{k_0+1} \leq \min\{S_1, S_2\}.
	\]
	Thus, using \eqref{eq:43} we get
	\begin{align*}
		\Big\lVert\calO_{00}\Big(
		\sum_{\lambda \in \Lambda} \calF^{-1} \big(\ind{R_{n_1,n_2}^\lambda} \calF f \big):
		n_1, n_2 \in \NN_0\Big)
		\Big\rVert_{L^2}
		\leq
		C k_0 s_1 s_2
		\vnorm{f}_{L^2}
		\leq C(s_1 s_2)^2
		\vnorm{f}_{L^2}.
\end{align*}

\vspace*{1ex} \noindent
	{\bf The region $U_w^{11}$.} We shall exploit the rationality of $\Lambda$. Suppose that
	for each $\lambda \in \Lambda$ we are given a function $f_\lambda \in L^2\big(\RR^2\big)$.
	We are going to show that there is a constant $C>0$ such that
	\begin{equation}
		\label{eq:12}
		\Big\lVert
		\sup_{(n_1, n_2) \in U_w^{11}}
		\Big|
		\sum_{\lambda \in \Lambda}
		e^{2\pi i \sprod{\lambda}{x}}
		\calF^{-1} \big(\ind{R_{n_1,n_2}} \calF f_\lambda \big)(x)
		\Big|
		\Big\rVert_{L^2({\rm d}x)}^2
		\leq
		C
		\sum_{\lambda \in \Lambda}
		\vnorm{f_\lambda}_{L^2}^2.
	\end{equation}
	For $x, y \in \RR^2$ we set
	\[
		I(x,y) =
		\sup_{(n_1, n_2) \in U_w^{11}}
		\Big|\sum_{\lambda \in \Lambda} e^{2\pi i \sprod{\lambda}{x}}
		\calF^{-1}\big(\ind{R_{n_1,n_2}} \calF f_\lambda \big)(y) \Big|,
	\]
	and
	\[
		J(x, y) =
		\sum_{\lambda \in \Lambda} e^{2\pi i \sprod{\lambda}{x}} f_\lambda(y).
	\]
	Let us observe that for each $y \in \RR^2$ the functions $x \mapsto I(x, y)$ and
	$x \mapsto J(x, y)$ are $(Q_1, Q_2)$-periodic. By Plancherel's theorem, for
	$u \in [0, Q_1] \times [0,Q_2]$ and $\lambda \in \Lambda$ we may estimate
	\begin{multline*}
		\big\lVert
		\calF^{-1}\big(\ind{R_{n_1,n_2}} \calF f_\lambda \big)(x)
		-
		\calF^{-1}\big(\ind{R_{n_1,n_2}} \calF f_\lambda \big)(x + u)
		\big\rVert_{L^2({\rm d}x)} \\
		=
		\big\lVert
		\ind{R_{n_1,n_2}}(\xi) \ind{R_{S_1,S_2}}(\xi)\calF f_{\lambda}(\xi)
		\big(1 - e^{2\pi i \sprod{\xi}{u}}\big)
		\big\rVert_{L^2({\rm d}\xi)}
		\lesssim
		\big(2^{-S_1} \abs{u_1} + 2^{-S_2} \abs{u_2}\big)
		\cdot \lVert f_\lambda \rVert_{L^2}.
	\end{multline*}
	Therefore, by the triangle inequality, Theorem \ref{th:2} and the Cauchy--Schwarz
	inequality
	\begin{multline*}
		\big\lvert
		\lVert I(x, x) \rVert_{L^2({\rm d}x)}
		-
		\lVert I(x, x + u) \rVert_{L^2({\rm d}x)}
		\big\rvert\\
		\leq
		\sum_{\lambda \in \Lambda}
		\Big\lVert
		\sup_{(n_1, n_2) \in U_w^{11}}
		\Big|
		\calF^{-1} \big( \ind{R_{n_1,n_2}} \calF f_{\lambda} \big)(x)
		-
		\calF^{-1} \big( \ind{R_{n_1,n_2}} \calF f_{\lambda} \big)(x+u)
		\Big|
		\Big\rVert_{L^2({\rm d}x)}\\
		\lesssim
		\sum_{\lambda \in \Lambda}
		\big\lVert
		\ind{R_{S_1, S_2}} (\xi) \calF f_{\lambda}(\xi) \big(1 - e^{2\pi i \sprod{\xi}{u}}\big)
		\big\rVert_{L^2({\rm d}\xi)} \\
		\lesssim
		\big(Q_1 2^{-S_1} + Q_2 2^{-S_2} \big) \sqrt{\abs{\Lambda}}
		\Big(\sum_{\lambda \in \Lambda} \lVert f_\lambda \rVert_{L^2}^2 \Big)^{1/2}.
	\end{multline*}
	Since $\sqrt{\abs{\Lambda}}\big(Q_1 2^{-S_1} + Q_2 2^{-S_2}\big) \leq 2$ we get
	\[
		\lVert I(x, x)
		\rVert_{L^2({\rm d}x)}
		\lesssim
		\lVert I(x, x + u) \rVert_{L^2({\rm d}x)}
		+
		\Big(
		\sum_{\lambda \in \Lambda} \lVert f_\lambda \rVert^2_{L^2}
		\Big)^{1/2}.
	\]
	By repeated change of variables and periodicity we obtain
	\begin{align*}
		\int_{[0, Q_1]\times[0,Q_2]}
		\int_{\RR^2} I(x, x+u)^2 {\: \rm d}x {\: \rm d}u
		=&
		\int_{[0, Q_1]\times[0,Q_2]}
		\int_{\RR^2} I(x-u, x)^2 {\: \rm d}x {\: \rm d}u\\
		=&
		\int_{\RR^2}
		\int_{[x_1-Q_1, x_1]\times[x_2-Q_2,x_2]}
		I(u, x)^2 {\: \rm d}u {\: \rm d}x\\
		=&
		\int_{\RR^2}
		\int_{[0, Q_1]\times[0,Q_2]}
		I(u, x)^2 {\: \rm d}u {\: \rm d}x\\
		=&
		\int_{[0, Q_1]\times[0,Q_2]}
		\int_{\RR^2} I(u, x)^2 {\: \rm d}x {\: \rm d}u.
	\end{align*}
	Next, for any $u \in [0, Q_1] \times [0, Q_2]$, by Theorem \ref{th:2} we get
	\[
		\big\lVert I(u, x) \big\rVert_{L^2({\rm d}x)}
		=
		\big\lVert
		\sup_{(n_1, n_2) \in U_w^{11}}
		\big|
		\calF^{-1}\big(\ind{R_{n_1, n_2}} J(u,\: \widehat{ \cdot } \:) \big)(x)
		\big|
		\big\rVert_{L^2({\rm d}x)}
		\leq
		C
		\big\lVert
		J(u, x)
		\big\rVert_{L^2({\rm d}x)}.
	\]
	By the orthogonality between exponential functions we obtain
	\begin{multline*}
        \int_{[0,Q_1]\times[0, Q_2]}
		\int_{\RR^2} \abs{J(u, x)}^2 {\: \rm d}x {\: \rm d}u
		=
		\int_{\RR^2}
		\int_{[0, Q_1] \times [0, Q_2]}
		\Big\lvert
		\sum_{\lambda \in \Lambda} e^{2\pi i \lambda u} f_\lambda(x)
		\Big\rvert^2
		{\rm d} u {\: \rm d} x\\
		=
		\sum_{\lambda_1, \lambda_2 \in \Lambda}
		\langle f_{\lambda_1} , f_{\lambda_2} \rangle
		\int_{[0, Q_1] \times [0, Q_2]}
		e^{2\pi i \sprod{(\lambda_1 - \lambda_2)}{u}}
		{\: \rm d} u
		=
		Q_1 Q_2
		\sum_{\lambda \in \Lambda} \vnorm{f_\lambda}^2_{L^2}.
	\end{multline*}
	Hence,
	\[
		\int_{[0,Q_1]\times[0,Q_2]}
		\vnorm{I(u, x)}_{L^2({\rm d} x)}^2 {\: \rm d} u
		\lesssim
		Q_1 Q_2
		\sum_{\lambda \in \Lambda} \vnorm{f_\lambda}_{L^2}^2
	\]
	and the proof of the claim \eqref{eq:12} is finished.

	To estimate the case with the supremum norm, we apply \eqref{eq:12} to
	$$
	\calF f_\lambda(\xi) = \ind{R_{0,0}}(\xi) \calF f(\xi + \lambda).
	$$
	Analogous reasoning gives the proof for the oscillation
        seminorm. One only needs to replace the supremum by the oscillation
        seminorm and use
	Theorem \ref{th:4} instead of Theorem \ref{th:2}.

	\vspace*{1ex}\noindent
	{\bf The region $U_w^{01}$.} We start by observing that
	\begin{align*}
		\Big\lVert\calO_{01}\Big(
		\seq{\sum_{\lambda \in \Lambda}
		\calF^{-1} \big(\ind{R_{n_1,n_2}^\lambda} \calF f \big)}
		{n_1, n_2 \in \NN_0}\Big)
		\Big\rVert_{L^2}
		\lesssim
		s_1
		\Big\|
		\sup_{(n_1, n_2) \in U_w^{01}}
		\Big|\sum_{\lambda \in \Lambda}
		\calF^{-1} \big(\ind{R_{n_1,n_2}^\lambda} \calF f \big)
		\Big|\Big\|_{L^2}
	\end{align*}
	since, thanks to lacunarity of $\calN$, the cardinality of the
        set $\{k \in \NN : N_{k+1} \leq S_1\}$ is
	bounded by a constant multiple of $s_1$. The proof will be completed if we show
	\[
		\Big\|
		\sup_{(n_1, n_2) \in U_w^{01}}
		\Big|
		\sum_{\lambda \in \Lambda}
		\calF^{-1} \big(\ind{R_{n_1,n_2}^\lambda} \calF f \big)
		\Big|
		\Big\|_{L^2}
		\lesssim s_1 \vnorm{f}_{L^2}.
	\]
	First, we apply \eqref{eq:6} to the sequence $\big(b_{n_1} : 1 \leq n_1 \leq S_1\big)$
	where
	\[
		b_{n_1} =\sup_{n_2\ge S_2}
		\Big|\sum_{\lambda \in \Lambda}
		\calF^{-1} \big(\ind{R_{n_1,n_2}^\lambda} \calF f \big)\Big|.
	\]
	In view of the proof for the region $U_w^{11}$   we get $\|b_{S_1}\|_{L^2} \lesssim \|f\|_{L^2}$.
	Therefore, it suffices to show that for every fixed $i \in \{0, \ldots, s_1\}$
	\begin{align}
    	\label{eq:24}
        \sum_{j = 1}^{2^{s_1-i}}
		\Big\|\sup_{n_2\ge S_2}
		\Big| \sum_{\lambda \in \Lambda}
		\calF^{-1} \Big(
		\big( \ind{R_{j2^{i},n_2}^\lambda} - \ind{R_{(j-1)2^{i},n_2}^\lambda} \big)
		\calF f
		\Big)\Big|
	 	\Big\|_{L^2}^2\lesssim\|f\|_{L^2}^2,
	\end{align}
	since
	\begin{align*}
		\big|b_{j2^i}-b_{(j-1)2^i}\big|
		\le
		\sup_{n_2 \ge S_2}
		\Big|
		\sum_{\lambda \in \Lambda}
		\calF^{-1} \Big(
		\big( \ind{R_{j2^{i},n_2}^\lambda} - \ind{R_{(j-1)2^{i},n_2}^\lambda} \big)
		\calF f
		\Big)\Big|.
	\end{align*}
	For a fixed $j \in \big\{1, \ldots, 2^{s_1-i}\big\}$ we may write
	\[
		\sum_{\lambda \in \Lambda}
		\Big( \ind{R_{j2^{i},n_2}^\lambda} - \ind{R_{(j-1)2^{i},n_2}^\lambda} \Big)
		=
		\sum_{\eta \in \Lambda_2}
		\sum_{\atop{\lambda \in \Lambda}{\lambda_2 = \eta}}
		\big(\ind{A_{j2^{i}}^{\lambda_1}} - \ind{A_{(j-1)2^{i}}^{\lambda_1}}\big)
		\otimes
		\ind{A_{n_2}^{\eta}},
	\]
	where $\Lambda_2 = \big\{\lambda_2 \in \RR : \lambda=(\lambda_1, \lambda_2)
	\in \Lambda\big\}$ and $A_n^\lambda = \lambda + A_n$. Therefore
    \begin{multline*}
		\sum_{\lambda \in \Lambda}
		\int_{\RR^2} e^{2\pi i \sprod{\xi}{x}}
		\Big(\ind{R_{j2^{i},n_2}^\lambda}(\xi) - \ind{R_{(j-1)2^{i},n_2}^\lambda}(\xi)\Big)
		\calF f(\xi) {\: \rm d}\xi\\
		=
		\sum_{\eta \in \Lambda_2}
		e^{2\pi i \eta x_2}
		\int_\RR e^{2\pi i \xi_2 x_2}
		\ind{A_{n_2}}(\xi_2)
		F_\eta(x_1, \widehat{\xi_2 + \eta})
		{\: \rm d} \xi_2,
    \end{multline*}
	where we have set
	\[
		F_\eta(x_1, x_2) =
		\sum_{\atop{\lambda \in \Lambda}{\lambda_2 = \eta}}
		\int_\RR
		e^{2\pi i \xi_1 x_1}
		\Big(\ind{A_{j2^{i}}^{\lambda_1}}(\xi_1) - \ind{A_{(j-1)2^{i}}^{\lambda_1}}(\xi_1)\Big)
		f(\widehat{\xi_1}, x_2)
		{\: \rm d} \xi_1.
	\]
	We notice that for any $\eta, \eta' \in \Lambda_2$, if $\eta \neq \eta'$ then
	\[
		\abs{\eta - \eta'} \geq Q_2^{-1}.
	\]
	Since the supremum is taken over $n_2 \geq S_2$ we may apply to $Q_2 \cdot \Lambda_2$ the
	one-parameter inequality from \eqref{eq:12} to obtain
	\begin{multline*}
		\Big\lVert
		\sup_{n_2 \ge S_2}
		\Big|\sum_{\eta \in \Lambda_2}
		e^{2\pi i \eta x_2}
		\int_\RR e^{2\pi i \xi_2 x_2}
		\ind{A_{n_2}}(\xi_2)
		F_\eta(x_1, \widehat{\xi_2 + \eta})
		{\: \rm d} \xi_2
		\Big|
		\Big\rVert_{L^2({\rm d} x_2)}^2\\
		=
		Q_2^{-1}
		\Big\lVert
		\sup_{n_2 \ge \log_2 \sqrt{\abs{\Lambda}}}
		\Big|\sum_{\eta \in Q_2 \Lambda_2}
		e^{2\pi i \eta x_2}
		\int_\RR e^{2\pi i \xi_2 x_2}
		\ind{B_{n_2}}(\xi_2)
		F_{Q_2^{-1} \eta}\big(x_1, \widehat{(Q_2^{-1}\xi_2 + Q_2^{-1}\eta)}\big)
		{\: \rm d} \xi_2
		\Big|
		\Big\rVert_{L^2({\rm d} x_2)}^2\\
		\lesssim
		Q_2^{-1}
		\sum_{\eta \in Q_2 \Lambda_2}
		\int_\RR
		\Big\lvert
		\ind{B_0}(\xi_2)
		F_{Q_2^{-1} \eta}\big(x_1, \widehat{(Q_2^{-1}\xi_2 + Q_2^{-1}\eta)}\big)
		\Big\rvert^2
		{\: \rm d} \xi_2\\
		=
		\sum_{\eta \in \Lambda_2}
		\int_\RR
		\Big\lvert
		\ind{A_{j_0}}(\xi_2)
		F_{\eta}(x_1, \widehat{\xi_2 + \eta})
		\Big\rvert^2
		{\: \rm d} \xi_2
	\end{multline*}
	where $B_n = \big(-2^{-n-1} Q_2 2^{-j_0}, 2^{-n-1} Q_2 2^{-j_0}\big)$ and $j_0 = \lfloor \log_2 Q_2\rfloor$.
	Next, $\eta \in \Lambda_2$, by Plancherel's theorem, applied
        with respect to the first variable, we get
	\begin{multline*}
		\int_\RR \int_\RR
		\big\lvert
		\ind{A_{j_0}^\eta}(\xi_2)
		F_\eta(x_1, \widehat{\xi_2})
		\big\rvert^2
		{\: \rm d} x_1 {\: \rm d} \xi_2
		=
		\int_\RR
		\int_\RR
		\big\lvert
		\ind{A_{j_0}^\eta}(\xi_2)
		\calF F_\eta(\xi_1, \xi_2)
		\big\rvert^2
		{\: \rm d} \xi_1 {\: \rm d} \xi_2 \\
		=
		\int_\RR
		\int_\RR
		\Big\lvert
		\sum_{\atop{\lambda \in \Lambda}{\lambda_2 = \eta}}
		\Big(\ind{A_{j2^{i}}^{\lambda_1}}(\xi_1) - \ind{A_{(j-1)2^{i}}^{\lambda_1}}(\xi_1)\Big)
		\ind{A_{j_0}^\eta}(\xi_2)
		\calF f(\xi)
		\Big\rvert^2
		{\: \rm d}\xi_1 {\: \rm d} \xi_2.
	\end{multline*}
	Moreover,
	$$
	\sup_{\xi_1 \in \RR}
	\sum_{j = 1}^{2^{s_1-i}}
	\Big\lvert
	\sum_{\atop{\lambda \in \Lambda}{\lambda_2 = \eta}}
	\ind{A_{j2^{i}}^{\lambda_1}}(\xi_1) - \ind{A_{(j-1)2^{i}}^{\lambda_1}}(\xi_1)
	\Big\rvert
	\leq
	1,
	$$
	since for any $\lambda, \lambda' \in \Lambda$, if $\lambda \neq \lambda'$
	and $\lambda_2 = {\lambda_2'} =  \eta$ then $\abs{\lambda_1 - {\lambda_1'}} \geq 1$.
	Therefore,
	\begin{multline*}
		\sum_{j=1}^{2^{s_1-i}}
		\sum_{\eta \in \Lambda_2}
		\int_\RR
		\int_\RR
		\Big\lvert
		\ind{A_{j_0}}(\xi_2)
		F_\eta(x_1, \widehat{\xi_2 + \eta})
		\Big\rvert^2
		{\: \rm d} x_1
		{\: \rm d} \xi_2\\
		=
		\sum_{j=1}^{2^{s_1-i}}
		\sum_{\eta \in \Lambda_2}
		\int_\RR
		\int_\RR
		\Big\lvert
		\sum_{\atop{\lambda \in \Lambda}{\lambda_2 = \eta}}
		\Big(\ind{A_{j2^{i}}^{\lambda_1}}(\xi_1) - \ind{A_{(j-1)2^{i}}^{\lambda_1}}(\xi_1)\Big)
		\ind{A_{j_0}^\eta}(\xi_2)
		\calF f(\xi)
		\Big\rvert^2
		{\: \rm d}\xi_1 {\: \rm d} \xi_2\\
		\lesssim
		\sum_{\eta \in \Lambda_2}
		\int_\RR
		\int_\RR
		\big\lvert
		\ind{A_{j_0}^\eta}(\xi_2)
		\calF f(\xi)
		\big\rvert^2
		{\: \rm d}\xi_1 {\: \rm d}\xi_2
		\lesssim
		\vnorm{f}_{L^2}^2
	\end{multline*}
	which shows \eqref{eq:24} and concludes the proof of Theorem \ref{th:3}.
\end{proof}

\begin{bibliography}{discrete}
	\bibliographystyle{amsplain}
\end{bibliography}

\end{document}